\documentclass[3p,times]{elsarticle}

\usepackage{amsmath}
\usepackage{amsfonts}
\usepackage{amsthm}

\newtheorem{theorem}{Theorem}
\theoremstyle{definition} 
\newtheorem{remark}{Remark}

\newcommand{\N}{\mathbb{N}}
\newcommand{\R}{\mathbb{R}}
\newcommand{\CFL}{\textrm{CFL}}
\newcommand{\icomplex}{\mathfrak{i}}
\newcommand{\es}[1]{e{#1}}
\newcommand{\dx}{\mathrm{d}x}
\newcommand{\dt}{\mathrm{d}t}
\newcommand{\ds}{\mathrm{d}s}

\usepackage{tikz}
\usepackage{subcaption}
\usepackage{adjustbox} 

\definecolor{plotgreen}{rgb}{0.0, 1.0, 0.5}

\usepackage[rightcaption]{sidecap}
\sidecaptionvpos{figure}{m}

\ifpdf
\DeclareGraphicsExtensions{.eps,.pdf,.png,.jpg}
\else
\DeclareGraphicsExtensions{.eps}
\fi

\usepackage{booktabs}

\begin{document}
	
\begin{frontmatter}
	
\journal{Applied Mathematics and Computation} 

\title{Stability analysis of Arbitrary-Lagrangian-Eulerian ADER-DG methods\\on classical and degenerate spacetime geometries}

\cortext[mycorrespondingauthor]{Corresponding author}

\author[Verona]{Mauro Bonafini}
\ead{mauro.bonafini@univr.it}
\author[Roma]{Davide Torlo}
\ead{davide.torlo@uniroma1.it}
\author[Verona]{Elena Gaburro\corref{mycorrespondingauthor}}
\ead{elena.gaburro@univr.it}

\address[Verona]{Department of Computer Science, University of Verona, Strada le Grazie 15, Verona, 37134, Italy}
\address[Roma]{Mathematics Department ``Guido Castelnuovo'', University of Rome Sapienza, p.le Aldo Moro, 5, Rome, 00189, Italy}

%
\begin{abstract}
	In this paper, we present a thorough von Neumann stability analysis of explicit and implicit Arbitrary-Lagrangian-Eulerian (ALE) ADER discontinuous Galerkin (DG) methods on \emph{classical} and \emph{degenerate} spacetime \emph{geometries} for hyperbolic equations.

	First, we rigorously study $\CFL$ stability conditions for the \emph{explicit} ADER-DG method, confirming results widely used in the literature while specifying their limitations.
	Moreover, we discuss stability bounds for ALE methods and characterize the admissible range of grid velocities once a target $\CFL$ is fixed.

	Next, we extend the stability study to ADER-DG in the presence of \emph{degenerate spacetime elements}, 
	with \emph{zero size} at the beginning and the end of the time step, but with a \emph{non zero spacetime volume}. 
	This kind of elements has been introduced in a series of articles on direct ALE methods by Gaburro \emph{et al.} to connect via spacetime control volumes regenerated Voronoi tessellations after a topology change. 
	Here, we imitate this behavior in a 1d surrogate setting by fictitiously inserting degenerate elements in between two cells.
	We show that over this simplified degenerate spacetime geometry, both for the explicit and implicit ADER-DG, the von Neumann analysis leads to the \emph{same} $\CFL$ \emph{stability conditions} as those for classical geometries, laying the theoretical foundations for their use in the context of ALE methods.
\end{abstract}
%
\begin{keyword}
	von Neumann stability analysis
	\sep 
	$\CFL$ stability constraints
	\sep
	Arbitrary-Lagrangian-Eulerian methods
	\sep 
	ADER discontinuous Galerkin methods
	\sep 
	explicit and implicit high order methods
	\sep
	degenerate spacetime geometries
\end{keyword}
\end{frontmatter}


\section{Introduction}

The spacetime predictor-corrector ADER approach, 
introduced in its original formulation in~\cite{mill,toro1}, extended to nonlinear systems in~\cite{toro4,toro3} and to unstructured domains in~\cite{Kaser2003,Kaser05,DumbserKaeser07,CastroToro}, 
and finally presented with a modern and effective approach in~\cite{dumbser2008unified},
is nowadays widely used to reach high order of accuracy also in time when solving hyperbolic partial differential equations (PDEs). 
It consists of an element-local iterative method to construct 
a piece-wise polynomial, called predictor, that represents an approximation of high order of accuracy both in space and time of the PDE solution. Then, in the corrector phase, the predictor is directly inserted into a one-step update formula to evolve the solution from one time step to the next one.
The method has been shown to be very reliable and effective since it introduces minimal dissipation and can be parallelized with minimal communication overhead~\cite{ADERGRMHD,dumbser2018efficient}.

In particular, ADER approaches can be used within the framework of discontinuous Galerkin (DG) finite element methods, as originally proposed in~\cite{QiuDumbserShu,dumbser_jsc,lorcher2007discontinuous, Gassner2011a}.
Later, the ADER-DG methods have been applied to many different hyperbolic systems,
starting from the classical Euler and shallow water equations \cite{rannabauer2018ader,fernandez2022arbitrary,rio2024high,ciallella2024very}, 
multiphase models \cite{rio2024high,kemm2020simple}, magnetohydrodynamics  \cite{balsara2012self,fambri2020discontinuous},  
dispersive, turbulent and reactive flows \cite{busto2021high,yuan2022hybrid,busto2022new,popov2023space},  
up to the study of much more complex systems, as unified models for continuum mechanics \cite{dumbser2016high,dumbser2017high,tavelli2020space,busto2020high,chiocchetti2021high,lakiss2024ader}
and various first order reformulations of the Einstein field equations for general relativity \cite{dumbser2024well,zanotti2025new,muzzolon2025high}.

Over the years, many authors have worked with ADER-DG schemes 
proposing performance improvements~\cite{veiga2024improving,micalizzi2025efficient,wolf2020optimization,popov2024effective,marot2025mixed}, 
implicit~\cite{dumbser2016space} and semi-implicit~\cite{offner2025analysis,boscheri2022construction,popov2025theoryinternalstructureaderdg} formulations,
and in particular their use in the context of \emph{direct} Arbitrary-Lagrangian-Eulerian (ALE) 
methods~\cite{boscheri2017arbitrary}.
In this latter framework, the mesh moves following as close as possible the local fluid flow and, 
in order to evolve the PDE from one mesh to the next one, 
an effective approach consists in connecting them via spacetime control volumes, 
eventually degenerate~\cite{gaburro2020high,gaburro2021bookchapter,gaburro2025high}, over which integrating the PDE. 
Here, ADER-DG methods offer the ideal tools to build a one-step solver 
that is automatically able to integrate the PDE in space and time and to perform \emph{directly} the time update
(without any need for projection and reconstruction procedures, typical of \emph{indirect} approaches \cite{ReALE2010, ReALE2015}).

Given the widespread adoption of ADER and ADER-DG methods, a rigorous theoretical investigation of their fundamental properties becomes necessary. While the convergence of the local spacetime predictor has been established for linear problems in \cite{jackson2017eigenvalues} and extended to nonlinear systems in \cite{busto2020high}, stability studies still require further investigation.
Regarding stability, some authors have addressed ADER-type schemes in the simplified framework of ordinary differential equations \cite{han2021dec, popov2025theoryinternalstructureaderdg, popov2025high}, but a comprehensive analysis in the PDE setting is still missing. The seminal work~\cite{dumbser2008unified} provides stability considerations only for low polynomial degrees, while for higher orders the admissible $\CFL$ numbers are typically determined empirically. Moreover, to the best of our knowledge, no dedicated analysis has been carried out for the newly introduced ALE ADER-DG methods on possibly degenerate spacetime geometries.

For the above reasons, it is of general interest to establish a rigorous stability analysis of ADER-DG methods.
There are several strategies to study the stability of numerical methods, some of which apply only to linear problems, while others are also able to treat nonlinear problems. 
For linear problems, one can look at the Lax--Richtmyer stability \cite{leveque2007finite}, which, however, turns out to be difficult to prove in many situations. In the context of periodic domains,
the von Neumann stability analysis guarantees Lax--Richtmyer stability and can be easily performed using Fourier modes instead of fully discretized problems, strongly reducing the complexity of the stability analysis.
Indeed, there are many works that have studied the von Neumann stability to deduce $\CFL$ conditions or other parameter conditions, e.g. for residual distribution in 1d and 2d \cite{michel2021spectral,michel2023spectral} and for DG in 1d \cite{sherwin2000dispersion}.

The extension of linear analyses to nonlinear problems is not straightforward, but the results obtained in the linear setting are often a good indicator of the $\CFL$ that should be used in the presence of non linearities.
A direct study of the $L^2$ stability can be done also in the non linear case, but it would still require an analysis of all the degrees of freedom and it is typically easy to apply only to simple time discretization (explicit/implicit Euler, SSPRK) \cite{cockburn1998local,liu20082}.
Another way to tackle the stability would be to mimic the integration by parts at the discrete level to guarantee energy or entropy stability in the spatial discretization \cite{friedrich2019entropy_sbp,offner2018stability}, and also to apply relaxation in time to preserve or dissipate a global entropy \cite{ranocha2020relaxation,abgrall2022relaxation,gaburro2023high}.

\subsection{Aim and structure of this paper}
The scope of this paper is to fill the theoretical gap in ADER-DG methods by providing a thorough von Neumann stability analysis,
accounting for both the explicit and implicit formulations on fixed or moving meshes.
In Section~\ref{sec.method-description}, to fix the notation and to make the paper self-consistent, 
we present the ADER-DG scheme written in the so-called direct Arbitrary-Lagrangian-Eulerian framework. We apply it to scalar nonlinear hyperbolic PDEs on a classical 1d+time geometry. 
Next, in Section~\ref{sec.stability-classical}, we perform the von Neumann stability analysis of the method and show numerical results for its consistency order.
Then, in Section~\ref{sec.degenerate}, we introduce into our computational domain \emph{fictitious spacetime degenerate elements}
that mimic the behavior of the hole-like sliver elements used in~\cite{gaburro2020high,gaburro2021unified,gaburro2021bookchapter,gaburro2025high} to connect meshes with topology changes, and we study the stability and consistency of the method in their presence.
Finally, in Section~\ref{sec.conclusions}, we draw our conclusions and we provide an outlook on future developments.

\section{Explicit and implicit formulation of ALE ADER-DG methods}  
\label{sec.method-description}

Let us consider a first order nonlinear scalar hyperbolic PDE
\begin{equation}
	\label{eq.PDE}
	\partial_t Q(x,t) + \partial_x f(Q(x,t)) = 0, 
\end{equation}
where $Q$ denotes the conserved scalar variable and $f$ denotes the flux function. 

\newpage
\noindent
\emph{Geometry description.}

\noindent
{\linespread{1.1}\selectfont	
Given an initial domain $\Omega^0 = [x_L, x_R]$, with $x_L, x_R \in \R$, we consider $N_e+1$ points $x_L = x_{1/2}^0 < x_{3/2}^0 < \dots < x_{N_e+1/2}^0 = x_R$ and define $N_e$ spatial elements $\Omega_i^0 := [x_{i-1/2}^0, x_{i+1/2}^0]$, $i = 1, \dots, N_e$.
Given an initial time $t^0 \in \R$, we discretize time by means of intervals $[t^n,t^{n+1}]$ with $n\in \mathbb N$ of step size $\Delta t^n = t^{n+1}-t^{n} > 0$ for $n > 0$. When not ambiguous, we will simply use the notation $\Delta t$ for $\Delta t^n$. 
In a Lagrangian perspective, we also assume the spatial computational domain to depend on $t$: at each time $t^n$ our computational domain is then denoted as $\Omega^n = [x_L^n, x_R^n]$, and we consider again $N_e+1$ points $x_L^n = x_{1/2}^n < x_{3/2}^n < \dots < x_{N_e+1/2}^n = x_R^n$ and define $N_e$ elements $\Omega_i^n := [x_{i-1/2}^n, x_{i+1/2}^n]$, $i = 1, \dots, N_e$. We denote the length of each element by $\Delta x_i^n := |\Omega_i^n| = (x_{i+1/2}^n - x_{i-1/2}^n)$ and define its barycentre $b_i^n := (x_{i+1/2}^n + x_{i-1/2}^n)/2$.
For each $i = 1,\dots,N_e$, we also define $\Sigma_{i\pm 1/2}^n$ as the segment connecting $(x_{i\pm1/2}^n, t^n)$ to $(x_{i\pm1/2}^{n+1}, t^{n+1})$ and introduce the spacetime control volume $C_i^n$  as the polygonal set with vertices $(x_{i-1/2}^n, t^n), (x_{i+1/2}^n, t^n), (x_{i+1/2}^{n+1}, t^{n+1}), (x_{i-1/2}^{n+1}, t^{n+1})$. In particular, $\partial C_i^n = \Sigma_{i-1/2}^n \cup (\Omega_i^n  \times \{t^n\})  \cup \Sigma_{i+1/2}^n \cup (\Omega_i^{n+1}  \times \{t^{n+1}\})$. We refer to Figure \ref{fig.geometry} for a visual depiction of the setting.
\par}

\begin{figure}[t]
	\centering
	\begin{tikzpicture}
		
		
		\draw[black] (-3.5,0) -- (6,0) -- (6.6,4) -- (-3.2,4) -- (-3.5,0);
		\draw[dotted] (-4,0)   -- (-3.5,0);
		\draw[dotted] (6,0)    -- (6.5,0);
		\draw[dotted] (6.6,4)  -- (7.1,4);
		\draw[dotted] (-3.7,4) -- (-3.2,4);
		\draw[black] (3,0) -- (3.5,4);
		\draw[black] (0,0) -- (-0.2,4);
		
		\node at  (-1.75,-0.3) {$\Omega_{i-1}^n$};
		\node at  (1.5,-0.3) {$\Omega_i^n$};
		\node at  (4.5,-0.3) {$\Omega_{i+1}^n$};
		
		\node at (-0.2,4.3) {$x_{i-\frac12}^{n+1}$};
		\node at  (-1.75,4.3) {$\Omega_{i-1}^{n+1}$};
		\node at  (2,4.3) {$\Omega_i^{n+1}$};
		\node at  (5,4.3) {$\Omega_{i+1}^{n+1}$};
		
		\node at  (0.3,2.8) {$\Sigma_{i-\frac12}^{n}$};
		\node at  (3.6,1.2) {$\Sigma_{i+\frac12}^{n}$};
		
		\node at (0,-0.3) {$x_{i-\frac12}^n$};
		\node at (3,-0.3) {$x_{i+\frac12}^n$};
		\node at (3.5,4.3) {$x_{i+\frac12}^{n+1}$};
		
		\node at (-1.8,2) {$C_{i-1}^n$};
		\node at (1.5,2) {$C_{i}^n$};
		\node at (5,2) {$C_{i+1}^n$};
		
		\node at (6.8,0) {$t^n$};
		\node at (7.6,4) {$t^{n+1}$};
	\end{tikzpicture}
	\caption{Prototype configuration of a classical spacetime domain discretization. At each time $t^n$ the spatial domain is subdivided into elements $\Omega_i^n$, connected in spacetime by control volumes $C_i^n$. Each control volume $C_i^n$ neighbours the previous and next control volume through the lateral interfaces $\Sigma_{i-1/2}^n$ and $\Sigma_{i+1/2}^n$ respectively.}
	\label{fig.geometry}
\end{figure}

\medskip
\noindent
\emph{Families of basis functions.}

\noindent
We fix a polynomial degree $N > 0$. For each $n > 0$ and $i \in \{1,\dots,N_e\}$, we introduce three different sets of basis functions. First, we define over each domain $\Omega_i^n$ the set of \emph{spatial} basis functions $\{\phi_{i,\ell}^n\}_\ell$. These are modal basis functions centred at the cell barycentre and scaled by the element length, defined as
\begin{equation}\label{eq.spacebasis}
	\phi_{i,\ell}^n \colon \Omega_i^n \to \R, \,x \mapsto \left(\frac{x - b_i^n}{\Delta x_{i}^n} \right)^\ell \quad \text{for } \ell = 0, \dots, N.
\end{equation}
Next, we define over each spacetime control volume two different sets of basis functions. First, we define the set of \emph{moving} basis functions $\{\psi_{i,\ell}^n\}_\ell$, obtained by tracking the spatial basis functions along the trajectory of the cell barycenter from $t^n$ to $t^{n+1}$. They are defined as
\begin{equation}\label{eq.movingbasis}
	\begin{aligned}
		& \psi_{i,\ell}^n \colon C_i^n \to \R, \, (x,t) \mapsto \left( \frac{x - \tilde b_i^n(t)}{\Delta x_{i}^n} \right)^{\ell} \quad \text{for } \ell = 0,\dots,N,
		\\[0.6ex]
		& \text{with } \ \tilde b_i^n(t) = \left(1-\frac{t-t^n}{\Delta t^n}\right) b_i^{n} + \frac{t-t^n}{\Delta t^n} b_i^{n+1}.
	\end{aligned}
\end{equation}
Next, we introduce the set of \emph{spacetime} basis functions $\{\theta_{i,\ell}^n\}_\ell$. These are modal basis functions centred at $(b_i^n, t^n)$ and scaled by the element spacetime lengths, defined as
\begin{equation}\label{eq.spacetimebasis}
	\begin{aligned}
		&\theta_{i,\ell}^n \colon C_i^n \to \R, \, (x,t) \mapsto \left(\frac{x - b_i^n}{\Delta x_{i}^n} \right)^{\ell_1} \, \left( \frac{t-t^n}{\Delta t^n}\right)^{\ell_2}
		\\[0.6ex]
		&\text{for } \ell = \ell_1 + \ell_2(2N + 3 - \ell_2)/2, \,\, 0 \leq \ell_1 + \ell_2 \leq N,
		\\[0.6ex]
		&\text{so that } \ell = 0,\dots, N_{st}-1, \text{ with } N_{st} :=(N+1)(N+2)/2.
	\end{aligned}
\end{equation}
The chosen indexing first considers purely spatial functions and progressively incorporates higher powers in time, getting this way the full set of $N_{st}$ spacetime basis functions up to total degree $N$. For $N = 4$, up to shifting and scaling, the indexing scheme and the corresponding basis functions are given as follows:
\begin{center}
	\begin{tabular}{r|lllll}
		& \multicolumn{5}{c}{{space index} $\ell_1$} \\
		{time index} $\ell_2$ & $\phantom{1}${0} & $\phantom{1}${1} & $\phantom{1}${2} & $\phantom{1}${3} & $\phantom{1}${4} \\ 
		\hline 
		\rule{0pt}{3ex} 
		{0} & $\phantom{1}0: 1$    & $\phantom{1}1: x$    & $\phantom{1}2: x^2$    & $\phantom{1}3: x^3$    & $\phantom{1}4: x^4$ \\
		{1} & $\phantom{1}5: t$    & $\phantom{1}6: xt$   & $\phantom{1}7: x^2t$   & $\phantom{1}8: x^3t$  &           \\
		{2} & $\phantom{1}9: t^2$  & $10: xt^2$ & $11: x^2t^2$ &            &           \\
		{3} & $12: t^3$  & $13: xt^3$ &            &            &           \\
		{4} & $14: t^4$ &           &            &            &           \\
	\end{tabular}
\end{center}

\medskip
\noindent
\emph{Representation of the approximate solution.}

\noindent
The conserved variable $Q$ is represented over each grid element $\Omega_i^n$ via a piecewise polynomial function $u_i^n$ in the form
\begin{equation}\label{eq.un}
	{u}_i^n(x)  = \sum_{\ell = 0}^{N} \phi_{i,\ell}^n(x) \, \hat{{u}}^{n}_{i,\ell} \quad  \text{for } x \in \Omega_i^n, \,i \in \{1,\dots, N_e\}, n > 0,
\end{equation}
where $\{ \hat{u}^{n}_{i,\ell} \}_{n,i,\ell} \subset \R$ are the degrees of freedom. We also write $\hat{u}_i^n \in \mathbb R^{N+1}$ to identify the corresponding vector of coefficients. The family $\{u_i^n\}_i$ defines a global (discontinuous) approximant $u^n$ over $\Omega^n$, which is well-defined everywhere except at the interfaces between elements. Approximations defined on each control volume $C_i^n$ will usually take the form of a spacetime polynomial function $q_i^n$ written as
\begin{equation}\label{eq.qn}
	q_i^n(x,t) = \sum_{\ell = 0}^{N_{st}-1} \theta_{i,\ell}^n (x, t) \hat{q}_{i,\ell}^n \quad \text{for } (x,t) \in C_i^n, \,i \in \{1,\dots, N_e\}, n > 0,
\end{equation}
where $\{ \hat{q}^{n}_{i,\ell} \}_{n,i,\ell} \subset \R$ are the degrees of freedom. Again, we write $\hat{q}_i^n\in \mathbb R^{N_{st}}$ to identify the corresponding vector of coefficients.

\subsection{Explicit ALE ADER-DG method}
Given an approximate solution $u^n$ in the form \eqref{eq.un}, the explicit ALE ADER-DG method is a two steps method aiming to compute the next approximant $u^{n+1}$. These steps are termed \emph{predictor} and \emph{corrector} step.

\subsubsection{Predictor step}
\label{sec.predictor}
The aim of this step is to build a \emph{local} approximation of the governing PDE~\eqref{eq.PDE} over each spacetime control volume $C_i^n$, using $u_i^n$ as initial condition at time $t^n$. Over a given spacetime control volume $C_i^n$, $i = 1,\dots, N_e$, we seek a spacetime polynomial function $q_i^n$ of the form \eqref{eq.qn} whose coefficients $\hat{q}_i^n$ are to be determined so that $q_i^n$ approximates the element-local PDE problem
\begin{equation*}
	\partial_t q_i^n(x,t) + \partial_x f(q_i^n(x,t)) = 0 \quad \text{over } C_i^n,
\end{equation*}
with $u_i^n$ to be considered as an inflow condition on $\Omega_i^n$. We fix now any spacetime test function $\theta_{i,k}^n$, $k = 0,\dots,N_{st}-1$, multiply the above PDE by $\theta_{i,k}^n$ and integrate over the given control volume $C_i^n$ to get
\begin{equation}\label{eqn.pred-step1}
	\int_{C_i^n} \theta_{i,k}^n(x,t) \left[ \partial_t q_i^n(x,t) + \partial_x f(q_i^n(x,t)) \right]\,\dx \dt = 0 \quad \text{for } k = 0,\dots, N_{st}-1.
\end{equation}
Then, we rewrite the first term in \eqref{eqn.pred-step1} by taking into account a potential jump of $q_i^n$ at the boundary $\Omega_i^n$ of $C_i^n$
via a simplified path-conservative approach~\cite{Pares2006,Castro2006,Castro2008},
obtaining
\begin{equation}\label{eqn.pred-step2}
	\begin{aligned} 
		\int_{C_i^n} \theta_{i,k}^n(x,t) \partial_t q_i^n(x,t) \,\dx \dt + 
		\int_{\Omega_i^n} \theta_{i,k}^n(x,t^n) \, \left( q_i^{n}(x,t^n) - u_i^n(x) \right) \,\dx  & \\[1pt]
		+ \int_{C_i^n} \theta_{i,k}^n(x,t) \partial_x f(q_i^n(x,t)) \,\dx \dt = \, 0
		\qquad \text{for } k = 0,\dots, N_{st}-1.
	\end{aligned} 
\end{equation}
For a given $q_i^n$, we now define the vector $\hat{f}_{i}^n = (\hat{f}_{i,0}^n, \dots, \hat{f}_{i,N_{st}-1}^n)$ so that
\begin{equation}\label{eq.fhat}
	f_i^n(x,t) = \sum_{\ell = 0}^{N_{st}-1} \theta_{i,\ell}^n (x, t) \hat{f}_{i,\ell}^n \qquad 
	\begin{aligned}
		&\text{is the $L^2$-projection of $f(q_i^n)$ over the finite}\\
		&\text{dimensional space $\textup{span}\{\theta_{i,0}^n,\dots,\theta_{i,N_{st}-1}^n\}$}.
	\end{aligned}
\end{equation}
Finally, by replacing in \eqref{eqn.pred-step2} the term $f(q_i^n)$ with $f_i^n$, we get
\begin{equation}\label{eqn.predictor}
	\begin{aligned} 
		& \int_{C_i^n} \theta_{i,k}^n(x,t) \partial_t q_{i}^n(x,t) \,\dx \dt  + 
		\int_{\Omega_i^n} \theta_{i,k}^n(x,t^n) q_{i}^n(x,t^n) \,\dx \\		
		&= \int_{\Omega_i^n} \theta_{i,k}^n(x,t^n) u_{i}^n(x) \,\dx
		- \int_{C_i^n} \theta_{i,k}^n(x,t) \partial_x f_{i}^n(x,t)\,\dx \dt  \qquad \text{for } k = 0,\dots, N_{st}-1.  
	\end{aligned} 
\end{equation}
By expanding $q_i^n$ and $u_i^n$ in \eqref{eqn.predictor}, see~\eqref{eq.qn} and \eqref{eq.un}, we obtain an implicit system of equations for the unknown coefficient vector $\hat q_i^n$. Here, the coefficient vector $\hat{f}_i^n$ is indeed a (generally nonlinear) function of $\hat{q}_{i}^n$, and so to approximate $\hat{q}_{i}^n$ we employ a fixed point Picard iteration, as detailed in~\cite{dumbser2008unified,hidalgo2011ader,busto2020high}. We can consider the vector $\hat q_i^{n,(0)}:=(\hat u_{i,0}^n, \dots, \hat u_{i,N}^n, 0, \dots, 0)$ as starting point for the iteration and remark that such fixed point procedure has already been proved to be convergent and to yield the desired order of 
accuracy, see~\cite{jackson2017eigenvalues,busto2020high,han2021dec} for more details.

Upon convergence, the predictor step provides us with a set of locally defined high order polynomials $q_i^n$ over each control volume $C_i^n$. These polynomials will serve as approximant of the solution in the interior of $C_i^n$ and will be used in the computation of the numerical fluxes at the interfaces.

\subsubsection{Corrector step}
\label{sec.corrector}
For each $i \in \{1,\dots,N_e\}$, we multiply the governing equation \eqref{eq.PDE} by a moving basis function $\psi_{i,k}^n$, $k = 0,\dots,N$, and integrate over the control volume $C_i^n$ to obtain
\begin{equation*}\label{eqn.intPDE}
	\int_{C_i^n} \psi_{i,k}^n(x,t) \left[ \partial_t Q(x,t) + \partial_x f(Q(x,t)) \right]\,\dx \dt = 0 \quad \text{for } k = 0,\dots, N.
\end{equation*}
Next, by integration by parts, we get 
\begin{equation*}
	\int_{\partial C_i^n} \psi_{i,k}^n (Q \, \hat n_{i,t}^n + f(Q) \, \hat n_{i,x}^n) \,\ds - \int_{C_i^n} \partial_t \psi_{i,k}^n Q + \partial_x \psi_{i,k}^n f(Q) \,\dx \dt = 0\quad \text{for } k = 0,\dots, N,
\end{equation*}
where $\mathbf{\hat n}_i^n = (\hat n_{i,x}^n,\hat n_{i,t}^n)$ denotes the outward pointing unit normal vector on the spacetime faces composing the boundary $\partial C_{i}^n$. We now decompose $\partial C_{i}^n$ into $\Omega_i^n$, $\Omega_i^{n+1}$ and the two lateral faces $\Sigma_{i\pm1/2}^n$, we recall that $u_i^n$ (resp. $u_i^{n+1}$) approximates $Q$ on $\Omega_i^n$ (resp. $\Omega_i^{n+1}$) and that the predictor $q_i^n$ approximates $Q$ inside the control volume $C_i^n$. Upon introducing a suitable numerical flux function $\mathcal{F} \colon \R\times \R \times S^1 \to \R$, we obtain
\begin{equation}\label{eqn.corrector}
	\begin{aligned} 
		&\int_{\Omega_i^{n+1}} \psi_{i,k}^n(x,t^{n+1}) u_{i}^{n+1}(x) \,\dx = 
		\int_{\Omega_{i}^n} \psi_{i,k}^n(x,t^n) u_{i}^{n}(x) \,\dx
		- \int_{\Sigma_{i-\frac12}^n} 
		\psi_{i,k}^n  \mathcal{F}(q_i^{n},q_{i-1}^{n}, \mathbf{\hat n}_i^n)\, \ds
		- \int_{\Sigma_{i+\frac12}^n} 
		\psi_{i,k}^n  \mathcal{F}(q_i^{n},q_{i+1}^{n}, \mathbf{\hat n}_i^n)\, \ds \\
		&+ \int_{C_i^n} 
		\partial_t \psi_{i,k}^n(x,t) q_i^n(x,t)  \,\dx \dt + \int_{C_i^n} \partial_x \psi_{i,k}^n(x,t) f(q_i^n(x,t)) \,\dx \dt \quad  \text{for }k = 0, \dots, N,
	\end{aligned} 
\end{equation}
where the coefficient vector $\hat u_i^{n+1}$ can be computed \emph{explicitly} upon knowing $\hat u_i^n$ and by integrating the remaining terms that depend only on the already computed predictors~$q_i^n$. The above formula is termed the \emph{corrector} step of the ADER-DG scheme.

The numerical flux function $\mathcal{F}$ is computed via an ALE Riemann solver applied to the inner and outer boundary-extrapolated data $q^-$ and $q^+$ at each boundary.
Here, the simplest choice consists in adopting a Rusanov-type~\cite{Rusanov:1961a} ALE flux. For $q^{-},q^{+} \in \R$ the interior and exterior extrapolated values, and for an outgoing normal vector $\mathbf{\hat n} = (\hat n_x, \hat n_t) \in S^1$,  we set
\begin{equation}\label{eq.rusanov} 
	\mathcal{F}(q^{-},q^{+},\mathbf{\hat n})  =  
	\frac{1}{2} \left( {f}(q^{+}) + {f}(q^{-})  \right) \hat n_x
	+ \frac{1}{2} \left( q^{+} + q^{-}  \right) \hat n_t
	- \frac{1}{2} s_{\max} \left( q^{+} - q^{-} \right),  
\end{equation} 
where, in full generality, $s_{\max}$ is the maximum of the spectral radii of the ALE Jacobian matrix w.r.t. the normal direction in space evaluated at $q^-$ and $q^+$, which here simplifies to
\[
s_{\max} = \max\{ |f'(q^-)  - v|, |f'(q^+)  - v| \} \cdot |\hat n_x|,
\]
where $v = -\hat n_t/|\hat n_x|$ is the local grid velocity. For the general expression for systems and in higher dimension we refer to \cite{boscheri2013arbitrary}. 


\subsection{Implicit ALE ADER-DG method}
Opposite to the explicit method, the implicit ALE ADER-DG method directly seeks a \emph{global} approximant $q^n$ by means of simultaneously computing all approximants $q_i^n$ over each control volume $C_i^n$. For each $i \in \{1,\dots,N_e\}$, we multiply the governing equation \eqref{eq.PDE} by a spacetime basis function $\theta_{i,k}^n$, $k = 0,\dots,N_{st}-1$, and integrate over the control volume $C_i^n$ to obtain
\begin{equation*}
	\int_{C_i^n} \theta_{i,k}^n(x,t) \left[ \partial_t Q(x,t) + \partial_x f(Q(x,t)) \right]\,\dx \dt = 0 \quad \text{for } k = 0,\dots, N_{st}-1.
\end{equation*}
Next, by integration by parts, we get 
\begin{equation*}
	\int_{\partial C_i^n} \theta_{i,k}^n (f(Q), Q) \cdot \mathbf{\hat n}_i^n \,\ds - \int_{C_i^n} \partial_t \theta_{i,k}^n Q + \partial_x \theta_{i,k}^n f(Q) \,\dx \dt = 0\quad \text{for } k = 0,\dots, N_{st}-1,
\end{equation*}
where $\mathbf{\hat n}_i^n = (\hat n_{i,x}^n,\hat n_{i,t}^n)$ denotes the outward pointing spacetime unit normal. Now, on each control volume $C_i^n$ we substitute $Q$ by an approximant $q_i^n$ in the form \eqref{eq.qn}, we decompose the boundary integral into four integrals (one over each of the four faces), we introduce a suitable numerical flux $\mathcal{F}$ and require $q_i^n(x,t^n) = u_i^n(x)$ for $x \in \Omega_i^n$, to get
\begin{equation}\label{eq.implicit}
	\begin{aligned} 
		&\int_{\Omega_i^{n+1}} \theta_{i,k}^n(x,t^{n+1}) q_i^n(x,t^{n+1}) \,\dx 
		- \int_{C_i^n} \partial_t \theta_{i,k}^n(x,t) q_i^n(x,t) \,\dx \dt
		- \int_{C_i^n} \partial_x \theta_{i,k}^n(x,t) f_i^n(x,t) \,\dx \dt \\
		& + \int_{\Sigma_{i-\frac12}^n} 
		\theta_{i,k}^n  \mathcal{F}(q_i^{n},q_{i-1}^{n}, \mathbf{\hat n}_i^n)\, \ds
		+ \int_{\Sigma_{i+\frac12}^n} 
		\theta_{i,k}^n  \mathcal{F}(q_i^{n},q_{i+1}^{n}, \mathbf{\hat n}_i^n)\, \ds
		=
		\int_{\Omega_{i}^n}	\theta_{i,k}^n(x,t^n)  u_i^n(x) \,\dx \\
		&\text{for } i = 1, \dots, N_e \text{ and } k = 0, \dots, N_{st}-1,
	\end{aligned} 
\end{equation}
where $f_i^n$ is defined as in \eqref{eq.fhat}. The set of equations \eqref{eq.implicit} implicitly defines a generally nonlinear system for the coefficients $\{\hat{q}_{i,\ell}^n\}_{i,\ell}$, which we solve by means of a Newton iteration coupled with a GMRES algorithm. Once each $q_i^n$ has been computed, the next approximant $u_i^{n+1}$ can be recovered through an $L^2$ projection of $q_i^n(\cdot,t^{n+1})$ with respect to the basis $\{\phi_{i,\ell}^{n+1}\}_\ell$.

\section{Stability of ALE ADER-DG methods on classical geometries}
\label{sec.stability-classical}

We move now to the focus of this paper. We are interested in studying the stability of the explicit ADER-DG method introduced in Section~\ref{sec.method-description} by means of the well known von Neumann stability analysis technique \cite[Chapters 9.6 and 10.5]{leveque2007finite}. We consider the linear advection equation (LAE) for a constant advection velocity $a > 0$, so that $f(Q) = aQ$. We restrict ourselves to the domain $\Omega=[0,2\pi]$ and consider a uniform grid of $N_e$ elements of size $\Delta x > 0$ and a positive $\Delta t$. For a given wave number $\kappa\in \mathbb \N$, we consider the periodic initial condition $u_0(x) = e^{\icomplex\kappa x}$ and close the system with periodic boundary conditions. Since the initial condition satisfies $u_0(x - \Delta x) = e^{-\icomplex\kappa \Delta x}u_0(x)$ for every $x \in \R$, it can be easily proved that the coefficients generated by an ADER-DG method on a fixed grid satisfy $\hat{u}_{i-1}^n = e^{-\icomplex\kappa \Delta x} \hat{u}_{i}^n$ for any $i, n$ (with the appropriate modifications at the boundaries).

The purpose of this section is then to study some necessary $\CFL$-type bounds for the ADER-DG scheme applied to the periodic LAE. These bounds are required to maintain a bounded norm for the coefficient vectors $\{\hat{u}_i^n\}_{i}$ over time, and they provide an informed guess for time step selection in complex scenarios where a direct stability analysis is less feasible.

\subsection{Stability of the explicit method}
To describe the explicit ADER-DG as applied to this setting, we first observe that we can drop the dependency of basis functions from the individual elements, since each $\Omega_i^n$ can be obtained (up to translation) from the interval $[0,1]$ rescaled by $\Delta x$ and each control volume $C_i^n$ from the cube $[0,1] \times [0,1]$ rescaled by $\Delta x$ in space and by $\Delta t$ in time. We conveniently define the reference basis functions
\[
\begin{aligned}
	\phi_{\ell} \colon [0,1] &\to \R, \, \xi \mapsto (\xi - 1/2)^\ell
	& \text{for } \ell &= 0,\dots,N, \\
	\psi_{\ell} \colon [0,1]\times[0,1] &\to \R, \, (\xi,\tau) \mapsto \phi_{\ell}(\xi)
	& \text{for } \ell &= 0,\dots,N,
\end{aligned}
\]
and
\[
\begin{aligned}
	&\theta_{\ell} \colon [0,1] \times [0,1] \to \R, \, (\xi,\tau) \mapsto \left(\xi - 1/2 \right)^{\ell_1} \, \tau^{\ell_2}
	\\[0.6ex]
	&\text{for } \ell = \ell_1 + \ell_2(2N + 3 - \ell_2)/2, \,\, 0 \leq \ell_1 + \ell_2 \leq N.
	\nonumber
\end{aligned}
\]
Since $f(Q) = aQ$, each predictor step in \eqref{eqn.predictor} simplifies to
\begin{equation}\label{eq:laepredictor}
	\hat{q}_i^n = \left(K_\tau^{st} + K_0^{st} + \frac{a\Delta t}{\Delta x}K_\xi^{st}\right)^{-1} M_0 \hat{u}_i^n,
\end{equation}
where the three $N_{st}\times N_{st}$ matrices $K_\tau^{st}, K_0^{st}, K_\xi^{st}$ and the $N_{st} \times (N+1)$ matrix $M_0$ are defined as
\begin{equation}\label{eq.matricespredictor}
	\begin{aligned}
		[ K_\tau^{st} ]_{k, \ell} &= \int_0^1\int_0^1 \theta_{k}(x,t) \partial_t \theta_\ell(x,t) \,\dx\dt, \qquad& [K_\xi^{st} ]_{k, \ell} &= \int_0^1\int_0^1 \theta_{k}(x,t) \partial_x \theta_\ell(x,t) \,\dx\dt,
		\\
		[ K_0^{st} ]_{k, \ell} &= \int_0^1 \theta_{k}(x,0) \theta_\ell(x,0) \,\dx, \qquad& [ M_0 ]_{k, \ell} &= \int_0^1 \theta_{k}(x,0) \phi_\ell(x) \,\dx.
		\\
	\end{aligned}
\end{equation}

By taking now into account that the Rusanov-type flux \eqref{eq.rusanov} for the LAE is a simple upwind scheme and recalling $a > 0$, the subsequent corrector step in \eqref{eqn.corrector} simplifies to
\begin{equation}\label{eq:laecorrector}
	\hat{u}_i^{n+1} = \hat{u}_i^{n} + \frac{a\Delta t}{\Delta x} M^{-1} (K_\xi\hat{q}_i^n - F_{\text{-}}^{r}\hat{q}_i^n + F_{\text{-}}^{l}\hat{q}_{i-1}^n),
\end{equation}
where the three $(N+1) \times N_{st}$ matrices $K_\xi, F_{\text{-}}^{r}, F_{\text{-}}^{l}$ and the $(N+1) \times (N+1)$ mass matrix $M$ are defined as
\begin{equation}\label{eq.matricescorrector}
	\begin{aligned}
		[ M ]_{k, \ell} &= \int_0^1 \phi_{k}(x) \phi_\ell(x) \,\dx, \qquad& [ K_\xi ]_{k, \ell} &= \int_0^1 \partial_x \psi_{k}(x,t) \theta_\ell(x,t) \,\dx\dt,
		\\
		[ F_{\text{-}}^{r} ]_{k, \ell} &= \int_0^1 \psi_{k}(1,t) \theta_\ell(1,t) \,\dt, \qquad& [ F_{\text{-}}^{l} ]_{k, \ell} &= \int_0^1 \psi_{k}(0,t) \theta_\ell(1,t) \,\dt.
		\\
	\end{aligned}
\end{equation}
Since $\hat{u}_{i-1}^n = e^{-\icomplex\kappa \Delta x} \hat{u}_{i}^n$, we have from \eqref{eq:laepredictor} that $\hat{q}_{i-1}^n = e^{-i\kappa \Delta x} \hat{q}_{i}^n$. Hence, the update in \eqref{eq:laecorrector} reduces to
\begin{equation*}
	\begin{aligned}
		\hat{u}_i^{n+1} &= \hat{u}_i^{n} + \frac{a\Delta t}{\Delta x}M^{-1} \left(K_\xi - F_{\text{-}}^{r} + e^{-\icomplex\kappa \Delta x}F_{\text{-}}^{l}\right)\hat{q}_i^n \\
		&= \hat{u}_i^{n} + \frac{a\Delta t}{\Delta x} M^{-1} \left(K_\xi - F_{\text{-}}^{r} + e^{-\icomplex\kappa \Delta x}F_{\text{-}}^{l}\right)\left(K_\tau^{st} + K_0^{st} + \frac{a\Delta t}{\Delta x}K_\xi^{st}\right)^{-1} M_0 \hat{u}_i^n \\
		&= A_N(\CFL, \theta) \cdot \hat{u}_i^n,
	\end{aligned}
\end{equation*}
with an amplification matrix $A_N(\CFL, \theta)\in \mathbb R^{(N+1)\times (N+1)}$ dependent only on $\theta = \kappa \Delta x$ and $\CFL = a\Delta t/\Delta x$. We observe that every matrix in \eqref{eq.matricespredictor} and \eqref{eq.matricescorrector} can be computed analytically; consequently, the same holds for each amplification matrix $A_N$.
For a matrix $A$, let us denote by $\rho(A)$ its spectral radius. Then, for a given $\CFL$ number, a necessary condition for the update formula for $\hat{u}_i^n$ to be stable, regardless of the value of $\Delta x$, is
\begin{equation}\label{eq.stabilitycondition}
	\rho(A_N(\CFL, \theta)) \leq 1   \quad \text{for every } \theta \in [0,\pi].
\end{equation}
In \eqref{eq.stabilitycondition}, the phase angle can be restricted to $[0,\pi]$ since, by construction,  $\rho(A_N(\CFL, \theta)) = \rho(A_N(\CFL, -\theta)))$ for every $\CFL \geq 0$ and $\theta \in [0,\pi]$.

\medskip
\noindent
\emph{Numerical setting for studying stability conditions.}

\begin{table}[t]
	\centering
	\caption{Increment indicator $\Delta_N^{m \to m+1}$ for varying degrees $N$ and for $m = 1, \dots, 11$. At each step, we evaluate how the discrete amplification factor is affected upon doubling the subdivision of the interval $[0,\pi]$. The low increments motivate the sampling decision $m = 6$.}
	\label{tab:results_comparison}
		\begin{tabular}{l ccccccccc}
			\toprule
			{} & {$N = 1$} & {$N = 2$} & {$N = 3$} & {$N = 4$} & {$N = 5$} & {$N = 6$} & {$N = 7$} & {$N = 8$} & {$N = 9$} \\
			\midrule
			$\Delta^{1 \to 2}$ & 0 & 0 & 0 & 4.11\es{-06} & 9.58\es{-07} & 3.54\es{-05} & 1.13\es{-05} & 8.13\es{-06} & 6.72\es{-09} \\
			$\Delta^{2 \to 3}$ & 0 & 0 & 0 & 3.02\es{-08} & 9.91\es{-07} & 3.73\es{-06} & 7.02\es{-06} & 9.55\es{-06} & 6.79\es{-14} \\
			$\Delta^{3 \to 4}$ & 0 & 0 & 0 & 3.34\es{-08} & 6.28\es{-07} & 1.82\es{-06} & 2.96\es{-06} & 1.46\es{-06} & 1.22\es{-13} \\
			$\Delta^{4 \to 5}$ & 0 & 0 & 0 & 3.29\es{-08} & 2.70\es{-07} & 6.53\es{-07} & 9.05\es{-07} & 7.10\es{-07} & 3.51\es{-13} \\
			$\Delta^{5 \to 6}$ & 0 & 0 & 0 & 4.33\es{-08} & 1.46\es{-07} & 1.98\es{-07} & 2.05\es{-07} & 1.59\es{-07} & 2.29\es{-12} \\
			$\Delta^{6 \to 7}$ & 0 & 0 & 4.44\es{-16} & 7.84\es{-09} & 3.50\es{-08} & 5.52\es{-08} & 5.37\es{-08} & 3.72\es{-08} & 1.55\es{-11} \\
			$\Delta^{7 \to 8}$ & 0 & 0 & 4.44\es{-16} & 2.30\es{-09} & 9.38\es{-09} & 1.31\es{-08} & 1.47\es{-08} & 1.07\es{-08} & 2.86\es{-12} \\
			$\Delta^{8 \to 9}$ & 0 & 0 & 4.44\es{-16} & 7.10\es{-10} & 2.41\es{-09} & 3.22\es{-09} & 3.42\es{-09} & 2.27\es{-09} & 7.95\es{-13} \\
			$\Delta^{9 \to 10}$ & 0 & 0 & 6.66\es{-16} & 1.80\es{-10} & 6.10\es{-10} & 8.92\es{-10} & 8.59\es{-10} & 6.34\es{-10} & 3.28\es{-13} \\
			$\Delta^{10 \to 11}$ & 0 & 0 & 4.44\es{-16} & 4.09\es{-11} & 1.48\es{-10} & 2.07\es{-10} & 2.33\es{-10} & 1.67\es{-10} & 8.93\es{-14} \\
			$\Delta^{11 \to 12}$ & 0 & 0 & 4.44\es{-16} & 1.13\es{-11} & 3.42\es{-11} & 5.42\es{-11} & 5.26\es{-11} & 3.87\es{-11} & 8.99\es{-14} \\
			\bottomrule
		\end{tabular}
\end{table}

\noindent
As we know that a finite volume scheme requires $\CFL \leq 1$, we consider a uniform subdivision of the $\CFL$ domain $[0, 1]$ into $10^4$ elements, i.e., we check condition \eqref{eq.stabilitycondition} over the discrete sample set
$
\{ c = k10^{-4} \mid k \in \N \} \cap [0,1].
$
Without loss of generality, we can focus our study on the set of $\CFL$ values for which the amplification factor related to constant states is no bigger than $1.1$. Hence, we define the set
\begin{equation}\label{eq.CFLrange}
	C_N := \left\{ c = k \cdot 10^{-4} \mid k \in \N, \,\rho(A_N(c, 0)) < 1.1 \right\} \quad \text{for each } N = 1,\dots,9.
\end{equation}
To numerically evaluate \eqref{eq.stabilitycondition}, we consider a finite set of phase angles $\theta$. We subdivide the interval $[0,\pi]$ into $2^m$ uniform subintervals, for a fixed $m \in \N$, and check condition \eqref{eq.stabilitycondition} at the corresponding $2^m+1$ nodes, obtaining the discrete amplification factor
\begin{equation}\label{eq.discretestabilitycondition}
	\rho_{N,m}(c) = \max_{k = 1,\dots,2^{m}+1} \rho\left(A_N\left(c, \frac{k-1}{2^{m}}\pi\right)\right) \quad \text{for } c \in [0,1].
\end{equation}
Since by construction $\rho_{N,m} \leq \rho_{N,m+1}$, to study the effect of the choice of $m$ on the discrete amplification factor, we can define the non-negative \emph{increment indicator}
\begin{equation}\label{eq.nextgridindicator}
	\Delta_N^{m \to m+1} = \max_{c \in C_N} \left\{ \rho_{N,m+1}(c) - \rho_{N,m}(c) \right\},
\end{equation}
that measures the variation of the discrete maximum when doubling the evaluation points. A thorough study of this increment indicator is reported in Table \ref{tab:results_comparison}. We can observe how the discrete amplification factor stabilizes very quickly as we increase $m$, independently of the degree $N$. For $N = 1,2,3$, the maximum seems to be achieved at $0$ and/or $\pi$ (as the increment indicator is always zero), while for $N \geq 4$ the discrete amplification factor is not increasing more than $10^{-7}$ already for $m>6$. In particular, if we take into account that the relevant amplification factors in Figure~\ref{fig:stabilityADERDGexplicit} are around $1+10^{-4}$, in the sequel, unless differently specified, we will always compute discrete maxima by subdividing $[0,\pi]$ into $2^6$ uniform elements, i.e., we fix $m = 6$.


\newpage
\noindent
\emph{Identification of relevant $\CFL$ bounds.}

\begin{figure}[]
	\centering
	\includegraphics[width=0.9\linewidth]{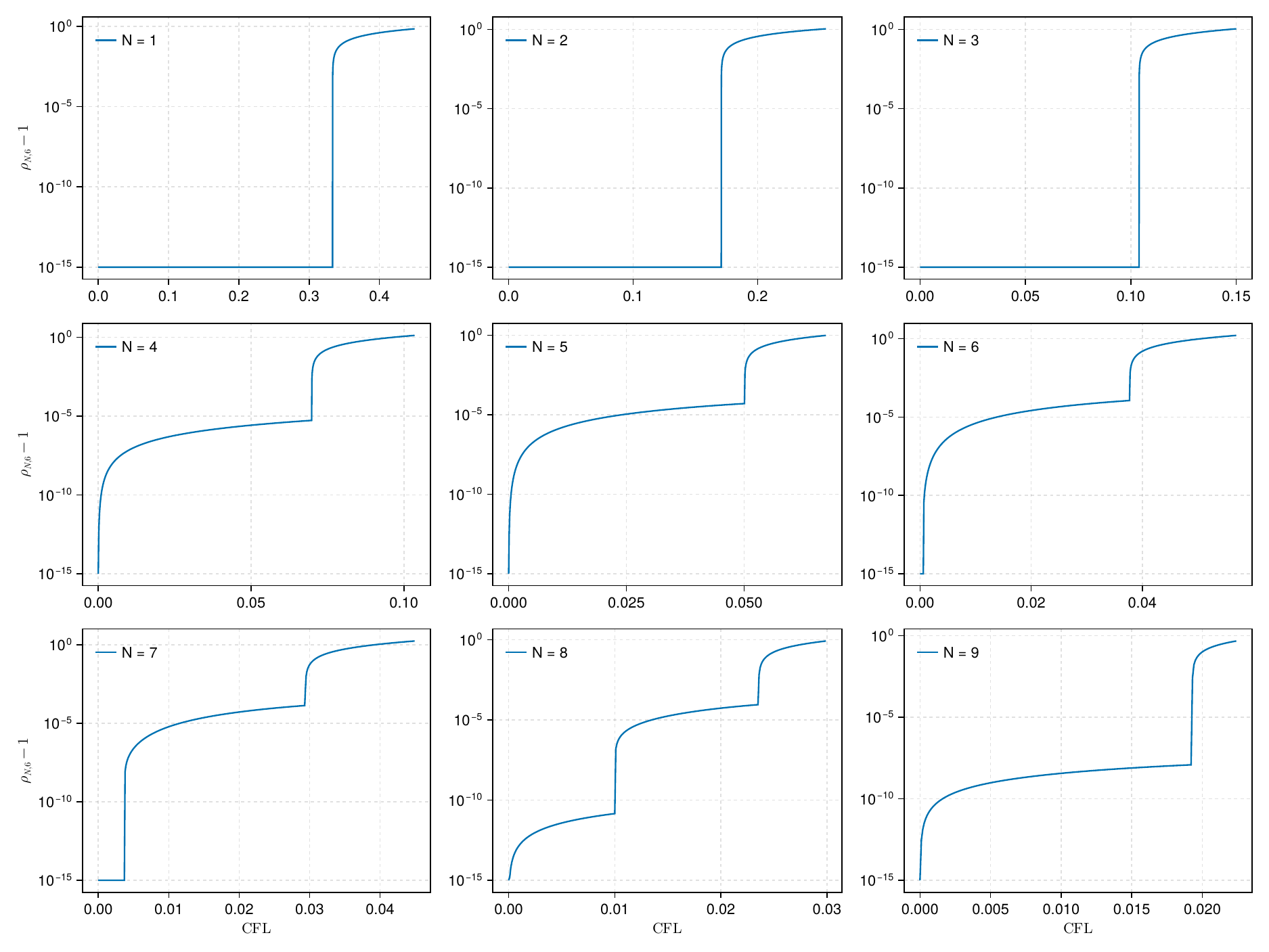}
	\caption{Semi-logarithmic plot of the function $\CFL \mapsto \rho_{N,6}(\CFL) - 1$, where the discrete maximal spectral radius is computed over $2^6+1$ equispaced points over $[0, \pi]$. The necessary stability condition for the explicit ADER-DG method requires this function to vanish, see \eqref{eq.stabilitycondition}.}
	\label{fig:stabilityADERDGexplicit}
	
	\vspace{1em}
	
	\includegraphics[width=0.9\linewidth]{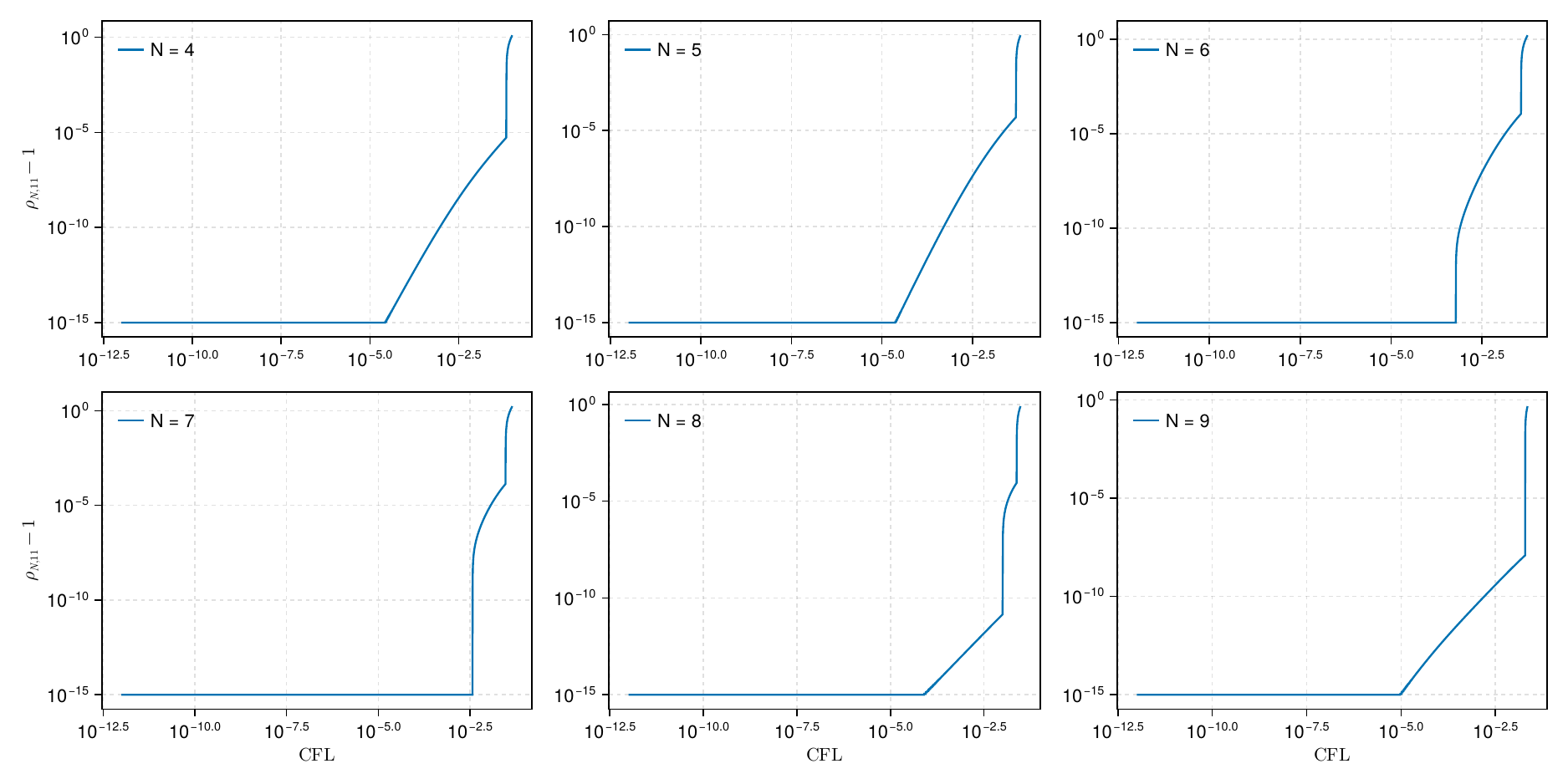}
	\caption{Logarithmic plot of the function $\CFL \mapsto \rho_{N,11}(\CFL) - 1$, where the maximum spectral radius is computed over $2^6+1$ equispaced points over $[0, \pi]$. Two clear corner points identify the stability threshold that often corresponds to very low CLF values, while the larger one is the typically used empirical CFL bound.}
	\label{fig:stabilityADERDGexplicitzoom}
\end{figure}

\begin{table}[tb]
	\centering
	\caption{Relevant $\CFL$ bounds for varying polynomial orders $N = 1,\dots, 9$. First line: commonly used empirical $\CFL$ bounds, as reported in \cite{Gaburro2021}. Second line: largest tested $\CFL$ value such that $\rho_{N,6}(\CFL) \leq 1+10^{-3}$. Third line: largest tested $\CFL$ value such that $\rho_{N,11}(\CFL) \leq 1+10^{-10}$.}
	\label{PNPN}
		\begin{tabular}{l *{9}{l}}
			\toprule
			& \multicolumn{1}{l}{$N = 1$} & \multicolumn{1}{l}{$N = 2$} & \multicolumn{1}{l}{$N = 3$} & \multicolumn{1}{l}{$N = 4$} & \multicolumn{1}{l}{$N = 5$} & \multicolumn{1}{l}{$N = 6$} & \multicolumn{1}{l}{$N = 7$} & \multicolumn{1}{l}{$N = 8$} & \multicolumn{1}{l}{$N = 9$} \\
			\midrule
			$\CFL_{empirical}$    & 0.33\phantom{00} & 0.17\phantom{00} & 0.10\phantom{00} & 0.069\phantom{0} & 0.045\phantom{0} & 0.038\phantom{0} & 0.03\phantom{00} & 0.02\phantom{00} & 0.015\phantom{0} \\
			$\CFL_{\rho \leq 1+1\es{-3}}$    & 0.3334           & 0.1709           & 0.1039           & 0.0698           & 0.0501           & 0.0377           & 0.0293           & 0.0235           & 0.0192           \\
			$\CFL_{\rho \leq 1+1\es{-10}}$   & 0.3333           & 0.1708           & 0.1039           & 9.23\es{-4} & 5.07\es{-4} & 7.76\es{-4} & 3.67\es{-3} & 9.89\es{-3} & 1.58\es{-3} \\
			\bottomrule
		\end{tabular}
\end{table}

\noindent
For each $N = 1,\dots,9$, we compute the discrete amplification factor $\rho_{N,6}$. We observe that the explicit ADER-DG scheme always preserves constant states: indeed, for $\theta = 0$ and $\hat{u}_i^n = (1,0,\dots,0)$, we have $A_N(c, 0) \cdot \hat{u}_i^n = \hat{u}_i^n$, independently of $c \in [0,1]$. Hence, $\rho_{N,m}(c) \geq 1$ for every $c$ and every $m$, in particular $\rho_{N,6} \geq 1$. In view of this, we report in Figure~\ref{fig:stabilityADERDGexplicit} the graph of $\rho_{N,6}-1$. Furthermore, the second row of Table~\ref{PNPN} identifies the highest $c \in C$ such that $\rho_{N,6}(c) \leq 1+10^{-3}$. For comparison, the first row recalls the $\CFL$ bounds widely used in the community (see, e.g., \cite{gaburro2021posteriori, chiocchetti2021high, ricardo2025scalable}), which have generally been determined empirically.

For $N \leq 3$, the empirical $\CFL$ bounds and the $\CFL$ bounds determined by our linear stability analysis coincide, in accordance with the analysis carried out in~\cite{dumbser2008unified}. On the other hand, for $4 \leq N \leq 9$, the method turns out to be unstable for commonly used $\CFL$ bounds. By comparing the first and second rows in Table~\ref{PNPN}, we observe that empirical $\CFL$ bounds just correspond to the rightmost jump in each stability plot, where the stability condition \eqref{eq.stabilitycondition}, even if only slightly, is violated. Hence, commonly used $\CFL$ values do not correspond to true stability limits. However, this does not mean that they cannot be used in applications as an informed guess for the selection of the time step. Indeed, in complex test cases, the small amplification factor due to instability is mitigated when combined with other discretization techniques such as numerical viscosity, Riemann solvers, damping factors, and limiters. As a result, authors usually adopt these values as their reference maximum $\CFL$ number when running numerical experiments.

A sharp von Neumann limit can be recovered for $N \geq 4$ only for much smaller values of the $\CFL$ number, as shown in Figure~\ref{fig:stabilityADERDGexplicitzoom}, where we pick a uniform $\CFL$ sampling in the log-domain (starting from $10^{-12}$ with a log-step of $0.005$). We report in the third and last row of Table~\ref{PNPN} the largest tested $\CFL$ value $c$ such that $\rho_{N,11}(c) \leq 1+10^{-10}$.
 
\begin{remark}\label{rem:ALE}
	The analysis extends to the case of a mesh moving with uniform velocity $v \in \R$. In this context, the stability condition turns into
	\[
	|a - v|\frac{\Delta t}{\Delta x} \leq \CFL_{\textup{max}}.
	\]
	Assuming $a > 0$, by using a value $\tilde{c} \leq \CFL_{\textup{max}}$ and selecting $\Delta t = \tilde{c} \Delta x/a$, the condition above rewrites as
	\[
	a\left(1-\frac{\CFL_{\textup{max}}}{\tilde{c}} \right) \leq v \leq a\left(1+\frac{\CFL_{\textup{max}}}{\tilde{c}} \right),
	\]
	which provides a bound on the admissible velocities of the grid. For example, if $\tilde{c} = \CFL_{\textup{max}}$ one obtains $0 \leq v \leq 2a$, while if $\tilde{c} = 0.5\cdot \CFL_{\textup{max}}$ the condition becomes $-a \leq v \leq 3a$, allowing more freedom in the mesh velocities. This suggests the time step selection rule
	\[
	\Delta t^n \leq \tilde{c} \min_{i = 1,\dots,N_e} \frac{\Delta x_i^n}{\max\{|f'(x_{i-1/2}^n)|, |f'(x_{i+1/2}^n)|\}},
	\]
	without explicitly incorporating the mesh velocity, but accounting for it implicitly by selecting a suitable $\tilde{c}$. Clearly, as for every explicit Lagrangian scheme, this time step selection rule highlights that the generation of excessively small elements must be avoided to prevent severe restrictions on the time step.
\end{remark}

\newpage
\subsection{Numerical consistency order of the explicit method}
\label{ssec.convergence-classical-explicit}

The explicit ADER-DG method is expected to have order of consistency $N+1$ for a given polynomial degree $N$. We verify this property by evolving a linear advection equation with unit velocity on the domain $[-6,6]$, with periodic boundary conditions and starting from $Q(x,0) = e^{-x^2}$. We vary the number of elements $N_e$ and we consider
a $\CFL$ at $0.9 \cdot \CFL_{\textup{max}}$, where $\CFL_{\textup{max}}$ is taken from the second row of Table~\ref{PNPN}. In Figure~\ref{fig:aderconsistencyexplicit}, we report, in logarithmic scale, the $L^2$ norm of the error with respect to the exact solution at time $T=12$, i.e., after a full loop. Each consistency order is correctly achieved as expected.

\begin{figure}[t]
	\centering
	\begin{subfigure}{0.48\linewidth}
		\includegraphics[width=\linewidth]{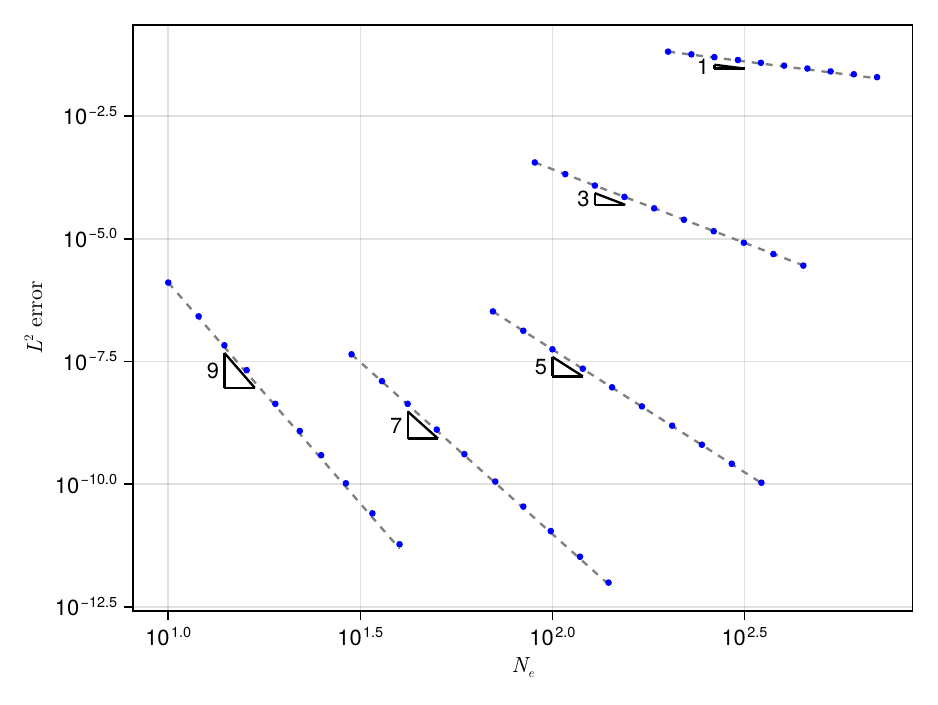}
	\end{subfigure}
	\hfill
	\begin{subfigure}{0.48\linewidth}
		\includegraphics[width=\linewidth]{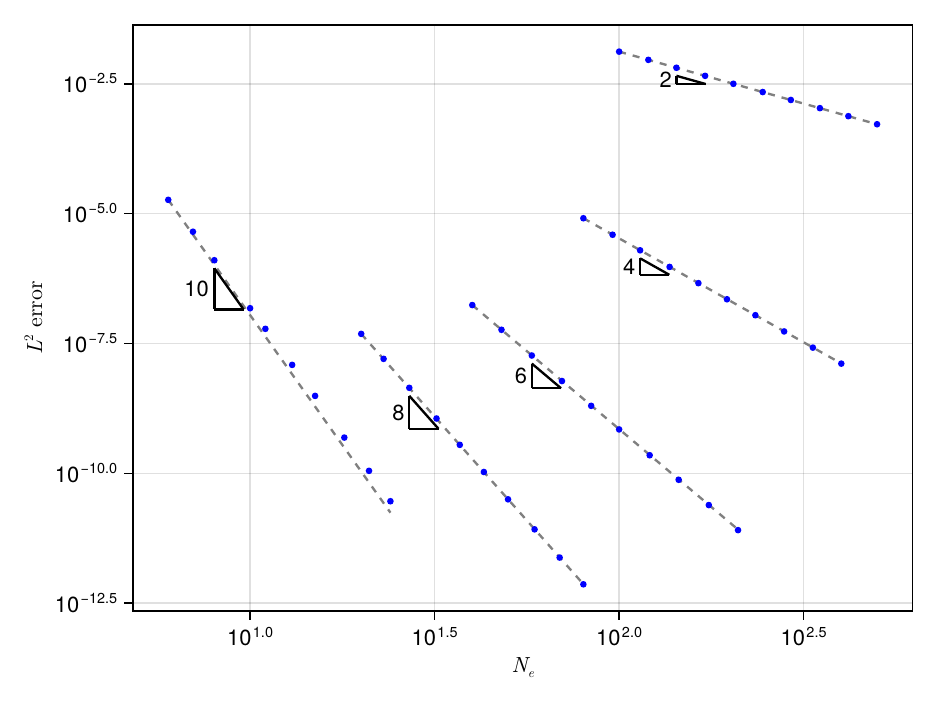}
	\end{subfigure}
	\caption{Consistency order of the explicit ADER-DG method: left odd orders for $N = 0,2,4,6,8$, right even orders for $N = 1,3,5,7,9$.}
	\label{fig:aderconsistencyexplicit}
\end{figure}

\subsection{Stability of the implicit method}

The stability analysis for the implicit ADER-DG method can be developed following the same steps used for the explicit counterpart. Again, we have to study the spectral radius of a suitable amplification matrix $A_N(\CFL, \theta)$, which can be computed exactly. We evaluate condition \eqref{eq.stabilitycondition} by means of computing $\rho_{N,6}$, see \eqref{eq.discretestabilitycondition}, on the discrete set
$
C = \{c = k \cdot 10^{-2} \mid k \in \mathbb{N} \} \cap [0, 10]
$.
We obtain the following:
\begin{center}
	\resizebox{\textwidth}{!}{%
		\begin{tabular}{c|ccccccccc}
			& $N=1$ & $N=2$ & $N=3$ & $N=4$ & $N=5$ & $N=6$ & $N=7$ & $N=8$ & $N=9$ \\ \hline \\[-10pt]
			$\max\limits_{c \in C} \rho_{N,6}(c) - 1$ & 0 & 8.88\es{-16} & 2.89\es{-15} & 3.55\es{-15} & 4.00\es{-15} & 2.95\es{-13} & 2.38\es{-14} & 2.52\es{-13}& 2.03\es{-12} \\ 
		\end{tabular}.%
	}
\end{center}
No clear $\CFL$-type bounds can be identified, suggesting unconditional stability.
This numerical check is coherent with the $L^2$ stability of the scheme provided in \cite{dumbser2016space}. Furthermore, the von Neumann stability of the scheme can also be proved by taking into account that the implicit method on the periodic LAE, upon slightly redefining the basis functions, can be seen as a Runge--Kutta method whose stability has been discussed in \cite{offner2025analysis}. We can prove the following.

\begin{theorem}
	Consider the linear advection equation
	\[
	\partial_t u(t,x) + \partial_x u(t,x) = 0 \quad \text{for } x\in(0,2\pi), t>0,
	\]
	coupled with periodic boundary conditions and initial datum $u(x,0) = e^{\icomplex\kappa x}$, $\kappa \in \N$. Consider over $[0,2\pi]$ a uniform Eulerian grid of $N_e > 0$ elements of size $\Delta x > 0$. Consider the Rusanov-type numerical flux \eqref{eq.rusanov}. Then, when applied to this problem, the implicit ADER-DG method as described in \eqref{eq.implicit} is stable for any $\Delta t > 0$.
\end{theorem}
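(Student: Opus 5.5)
We will prove stability through an energy argument for the Fourier-mode problem rather than by computing the spectrum of $A_N$. This is equivalent to showing $\rho(A_N(\CFL,\theta))\le 1$ for every $\theta$ and every $\CFL>0$.

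\emph{Reduction.} With $f(Q)=Q$ and a fixed grid, the Rusanov flux \eqref{eq.rusanov} on the lateral faces $\Sigma_{i\pm1/2}^n$ reduces to the upwind flux $\mathcal F = q^{\text{upwind}}\,\hat n_x$. Moreover, since $f_i^n=q_i^n$ exactly, the $L^2$ projection in \eqref{eq.fhat} plays no role. By the Fourier ansatz, $\hat q_{i-1}^n=e^{-\icomplex\theta}\hat q_i^n$, so \eqref{eq.implicit} becomes a single $N_{st}\times N_{st}$ linear system on one cell. Its unknowns are the coefficients of a spacetime polynomial $q$ on $[0,1]^2$, and the data is $u_i^n$. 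Writing $u^{n+1}$ for the $L^2$ projection of $q(\cdot,1)$, the goal becomes the estimate
\[
\sum_i \|u_i^{n+1}\|_{L^2(\Omega_i)}^2\le\sum_i\|u_i^n\|_{L^2(\Omega_i)}^2 .
\]
For a single Fourier mode the cell norms are equal, so this gives $|\hat u^{n+1}|_M\le|\hat u^n|_M$ in the mass-matrix norm. Norm-boundedness of $A_N$ for all powers then yields $\rho\le1$ and von Neumann stability.

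\emph{Energy identity.} Since $q_i^n\in\mathrm{span}\{\theta_{i,k}\}$, we may test \eqref{eq.implicit} with $\theta_{i,k}=q_i^n$ itself, which is the standard space-time DG argument of \cite{dumbser2016space}. Integrating by parts in the volume terms gives
\[
-\int_C q\,\partial_t q=-\tfrac12\int_{\Omega}\bigl(q(x,1)^2-q(x,0)^2\bigr),
\qquad
-\int_C q\,\partial_x q=-\tfrac12\int_0^1\bigl(q(1,t)^2-q(0,t)^2\bigr).
\]
Here the time-boundary term enters through \eqref{eq.implicit} via the datum $u$, which replaces $q(\cdot,0)$ in the weak formulation. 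The temporal part then gives
\[
\tfrac12\|q(\cdot,1)\|^2+\tfrac12\|q(\cdot,0)-u\|^2-\tfrac12\|u\|^2 .
\]
Summing the spatial upwind terms over all cells and using periodicity gives
\[
\tfrac12\int_0^{\Delta t}\sum_{\text{interfaces}}[\![q]\!]^2\,\dt\ \ge 0,
\]
which is exactly the classical DG upwind dissipation. Collecting terms,
\[
\sum_i\|q_i(\cdot,t^{n+1})\|^2\le\sum_i\|u_i^n\|^2 ,
\]
and since the $L^2$ projection is a contraction, $\sum_i\|u_i^{n+1}\|^2\le\sum_i\|u_i^n\|^2$, for every $\Delta t>0$.

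\emph{Main obstacle.} The delicate step is justifying the treatment of the initial condition. In \eqref{eq.implicit} the term $\int\theta_k q(x,t^n)$ does not appear: the requirement $q_i^n(x,t^n)=u_i^n$ is imposed strongly. This requirement is generally incompatible with $q$ having total degree $N$ in space-time unless it only constrains the $\ell_2=0$ coefficients. We first check that it does precisely this: the coefficients with $\ell_2=0$ are fixed equal to $\hat u_i^n$. The remaining equations are then the tests with $k$ such that $\ell_2\ge1$, or else all tests with a weak inflow term. We therefore plan to show that, after integrating by parts once more in time, the system is equivalent to the upwind-in-time formulation. In that formulation the term $\int\theta_k(q(\cdot,t^n)-u)$ vanishes identically, and testing with $q$ is legitimate. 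If the test space excludes $\theta$'s with $\ell_2=0$, we will instead test with $q-q(\cdot,t^n)$ plus the spatial-equation contribution. Failing both, the fallback is the reformulation mentioned in the paper: reinterpret the scheme as a collocation/Runge--Kutta method in time, namely a Radau-type DG-in-time, which is A-stable and even L-stable. It is applied to the semidiscrete upwind DG system, whose operator has spectrum in the closed left half-plane by the same spatial energy identity. This implies $\rho(A_N)\le1$ by the result of \cite{offner2025analysis}.
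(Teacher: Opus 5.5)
Your proof is correct, but its main line is not the paper's route. Your ``fallback'' is the paper's argument. The paper switches to tensor-product Gauss--Legendre Lagrange bases in space and time, citing \cite{veiga2024improving} for equivalence, and writes the von Neumann-reduced scheme in Kronecker form. Using $[A^t]^{-1}E^t=\mathbf{1}$, it recognizes a Runge--Kutta method with matrix $[A^t]^{-1}M^t$ applied to the upwind DG semidiscretization. It then quotes A-stability of that method from \cite{offner2025analysis} and shows $\mathrm{Re}\langle q,-[M^s]^{-1}D^s q\rangle_{M^s}\le-\tfrac12(|q(x_{i-1/2})|-|q(x_{i+1/2})|)^2\le 0$.

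Your primary argument is instead a direct space-time energy estimate. It is the same testing-with-$q_i^n$ technique the paper uses later in its proof of the sliver theorem. It gives an $L^2$ contraction rather than only a spectral-radius bound, and needs no external A-stability result. It also applies verbatim to the total-degree space \eqref{eq.spacetimebasis} that \eqref{eq.implicit} actually uses, because it only needs test space $=$ trial space and exact integration. The paper's route, by contrast, replaces that space with the tensor-product space of dimension $(N+1)^2\neq N_{st}$ and relies on the cited equivalence. In exchange, the paper's route gives a structural interpretation of the scheme as an A-stable RK method in time, which transfers to any semidiscrete operator with spectrum in the closed left half-plane.

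Two small points on your version. For complex Fourier data, test with $\bar q$ and take real parts, or use that the scheme has real coefficients. Your global argument covers only $\theta=2\pi\kappa/N_e$; to get \eqref{eq.stabilitycondition} for all $\theta\in[0,\pi]$, run the identity on a single cell with $q_{i-1}=e^{-\icomplex\theta}q_i$. There the spatial terms are bounded below by $\tfrac12(|q(x_{i-1/2})|-|q(x_{i+1/2})|)^2\ge0$.

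Your ``main obstacle'' is not an obstacle, and two statements in that paragraph are wrong. First, \eqref{eq.implicit} does not impose $q_i^n(\cdot,t^n)=u_i^n$ strongly. The condition only replaces the bottom-face trace by $u_i^n$, so \eqref{eq.implicit} already is the upwind-in-time formulation: all $N_{st}$ tests $\theta_{i,k}^n$, $N_{st}$ unknowns. Fixing the $\ell_2=0$ coefficients to $\hat u_i^n$ would leave $N_{st}-(N+1)$ unknowns against $N_{st}$ equations. Second, in the upwind-in-time formulation the term $\int\theta_k\,(q(\cdot,t^n)-u)$ does not vanish identically. It is exactly what produces the dissipative $\tfrac12\|q(\cdot,t^n)-u\|^2$ in your own energy identity. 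Since $q_i^n$ lies in the span of the test functions, testing \eqref{eq.implicit} with $q_i^n$ is immediately legitimate. Your energy-identity paragraph is therefore already a complete proof, and the contingency plans (testing with $q-q(\cdot,t^n)$, or the RK fallback) are unnecessary.
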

\begin{proof}
	In the uniform Eulerian case, we have that $\Omega_i^{n+1}=\Omega_i^{n}=\Omega_i$, each spatial element has measure $\Delta x > 0$ and $C_i^n = \Omega_i \times [t^n,t^{n+1}]$. In space, over each element $\Omega_i$ we can use Lagrangian basis functions $\{\psi^s_{i,k_s}\}_{k_s=0}^{N}$ defined on Gauss--Legendre points, and similarly in time we use Lagrangian basis functions $\{\psi^t_{n,k_t}\}_{k_t=0}^{N}$ defined on Gauss--Legendre points over $[t^n, t^{n+1}]$. Indeed, this choice is coherent with the one made in Section~\ref{sec.method-description} because it has been shown that any polynomial choice with exact quadrature leads to an equivalent method \cite{veiga2024improving}. Spacetime basis functions are then defined in a tensor fashion as $\theta_{i,(k_s, k_t)}^n(x,t) = \psi^s_{i,k_s}(x)\psi^t_{n,k_t}(t)$, for indices $k_s, k_t \in \{0,\dots,N\}$. In particular, approximants take now the form
	\[
	q^n_{i}(t,x) = \sum_{k_s, k_t = 0}^N\theta_{i,(k_s, k_t)}^n(x,t)\hat{q}_{i,(k_s, k_t)}^n \quad \text{and} \quad u^n_{i}(x) = \sum_{k_s = 0}^N\psi^s_{i,k_s}(x)\hat{u}^n_{i,k_s}
	\]
	for given coefficients $\{\hat{q}_{i,(k_s, k_t)}^n\}_{k_s, k_t} \subset \R$ and $\{\hat{u}^n_{i,k_s}\}_{k_s} \subset \R$. In this setting, the implicit ADER-DG method in \eqref{eq.implicit} can be written, using integration by parts in time, as 
	\[
	\begin{aligned}
			&\int_{\Omega_i} \theta_{i,(k_x,k_t)}^n(x,t^{n+1}) q_i^n(x,t^{n+1}) \,\dx -\int_{\Omega_i} \theta_{i,(k_x,k_t)}^n(x,t^{n}) u_i^n(x) \,\dx -\int_{C^n_i} \partial_t  \theta_{i,(k_x,k_t)}^n(x,t) q_i^n(x,t) \,\dx\dt \\
			&-\int_{C^n_i} \partial_x  \theta_{i,(k_x,k_t)}^n(x,t) q_i^n(x,t) \,\dx\dt + \int_{t^n}^{t^{n+1}} \left( \theta_{i,(k_x,k_t)}^n(x_{i+\frac12},t) q_i^n(x_{i+\frac12},t) -\theta_{i,(k_x,k_t)}^n(x_{i-\frac12},t) q_{i-1}^n(x_{i-\frac12},t) \right) \,\dt=0
		\end{aligned}
	\]
	for $i = 1, \dots, N_e$ and $k_x, k_t = 0, \dots, N$. Now, by using the von Neumann ansatz that $q_{i-1}^n = e^{-\icomplex \theta}q_i^n$, $\theta = \kappa \Delta x$, and omitting the cell indexes $i,n$, we define
	\[
	\begin{aligned}
			&[M^s]_{k_s,j_s} := \int_{\Omega_i} \psi^s_{k_s} \psi^s_{j_s}\,\dx, \qquad [M^t]_{k_t, j_t} := \int_{t^n}^{t^{n+1}} \psi^t_{k_t}\psi^t_{j_t} \,\dt, \qquad [E^t]_{k_t} := \psi_{k_t}^n(t^{n}),\\
			&[D^s]_{k_s,j_s}:=  \psi^s_{k_s}(x_{i+\frac12}) \psi^s_{j_s}(x_{i+\frac12}) -\psi^s_{k_s}(x_{i-\frac12}) e^{-\icomplex \theta}\psi^s_{j_s}(x_{i+\frac12}) -\int_{\Omega_i} \partial_x\psi^s_{k_s}(x) \psi^s_{j_s}(x)\,\dx \\
			& [A^t]_{k_t, j_t} := \psi^t_{k_t}(t^{n+1})\psi^t_{j_t}(t^{n+1})-\int_{t^n}^{t^{n+1}} \partial_t \psi^t_{k_t}(t)\psi^t_{j_t}(t) \,\dt
		\end{aligned}
	\]
	so that the scheme becomes, again with Einstein notation,
	\begin{equation*}
			[A^t]_{k_t,j_t}[M^s]_{k_s,j_s} \hat{q}_{(j_s,j_t)} -[M^s]_{k_s,j_s}[E^t]_{k_t} \hat{u}_{j_s} + [M^t]_{k_t,j_t}[D^s]_{k_s,j_s} \hat{q}_{(j_s,j_t)}=0.
		\end{equation*}
	By inverting the mass matrices, and using the fact that $[A^t]^{-1} E^t $ gives a vector of ones~\cite{veiga2024improving}, we obtain 
	\begin{equation*}
			\hat{q}_{(z_s,z_t)} -\hat{u}_{z_s} + ([A^t]^{-1}_{z_t,k_t} [M^t]_{k_t,j_t})([M^s]^{-1}_{z_s,k_s}[D^s]_{k_s,j_s}) \hat{q}_{(j_s,j_t)}=0.
		\end{equation*}
	This can be written as a Runge--Kutta scheme with time stages $k_t$ for the ODE semidiscrete problem
	\[
	\partial_t \hat{q}_{z_s} + ([M^s]^{-1}_{z_s,k_s}[D^s]_{k_s,j_s}) \hat{q}_{j_s}=0.
	\]
	In particular, it has been shown \cite{offner2025analysis} that the Runge--Kutta method given by the matrix $[A^t]^{-1}_{z_t,k_t} [M^t]_{k_t,j_t}$ is A-stable. So, the method is unconditionally stable if the real part of the eigenvalues of $(-[M^s]^{-1}_{z_s,k_s}[D^s]_{k_s,j_s})$ are all nonpositive or equivalently that the form $(-[M^s]^{-1}_{z_s,k_s}[D^s]_{k_s,j_s})$ is negative semidefinite. We show this in the scalar product defined by $\left\langle q, p \right\rangle_{M^s}:=q_{k_s} [M^s]_{k_s,j_s} p_{j_s}$ because for Lagrange basis functions the spatial mass matrix is positive definite.
	So, we have that
	\[
	\begin{aligned}
			\left\langle q, -[M^s]^{-1}[D^s] q \right\rangle_{M^s} &=- q_{k_s} [D^s]_{k_s,j_s}q_{j_s} =\int_{\Omega_i} \partial_x q(x)  q(x)\,\dx - q(x_{i+1/2})^2+q(x_{i-1/2})e^{-\icomplex \theta}q(x_{i+1/2}) \\
			&=\frac{q(x_{i+1/2})^2}{2}-\frac{q(x_{i-1/2})^2}{2}- q(x_{i+1/2})^2+q(x_{i-1/2})e^{-\icomplex \theta}q(x_{i+1/2}) \\ &=-\frac{q(x_{i+1/2})^2}{2}-\frac{q(x_{i-1/2})^2}{2}+q(x_{i-1/2})e^{-\icomplex \theta}q(x_{i+1/2}).
		\end{aligned}
	\]
	By moving to the real part of the above form, we have
	\begin{equation*}
			\begin{aligned}
					&\text{Re}\left(\left\langle  q, -[M^s]^{-1}[D^s] q \right\rangle_{M^s} \right) \leq
					- \frac{q(x_{i+1/2})^2}{2}-\frac{q(x_{i-1/2})^2}{2}+|q(x_{i-1/2})|\,|q(x_{i+1/2})| 
					=-\frac12 (|q(x_{i-1/2})|-|q(x_{i+1/2})|)^2 \leq 0.
				\end{aligned}
		\end{equation*}
	This shows that all eigenvalues of the semidiscrete operator fall in the left complex half-plane, hence, the method is von Neumann stable independently of the time step.
\end{proof}

\subsection{Numerical consistency order on classical geometries}
\label{ssec.convergence-classical-implicit}

The implicit ADER-DG method, as its explicit counterpart, is expected to have order of consistency $N+1$ for a given polynomial degree $N$. We verify this by means of the same LAE evolution test case described in Section~\ref{ssec.convergence-classical-explicit}. We vary the number of elements $N_e$ and we consider
a $\CFL$ at $10 \cdot \CFL_{\textup{max}}$ where $\CFL_{\textup{max}}$ is taken from the second row of Table~\ref{PNPN}. In Figure~\ref{fig:aderconsistencyimplict}, we report, in logarithmic scale, the $L^2$ norm of the error with respect to the exact solution at time $T=1$.
Each consistency order is correctly achieved as expected.

\begin{figure}[t]
	\centering
	\begin{subfigure}{0.48\linewidth}
		\includegraphics[width=\linewidth]{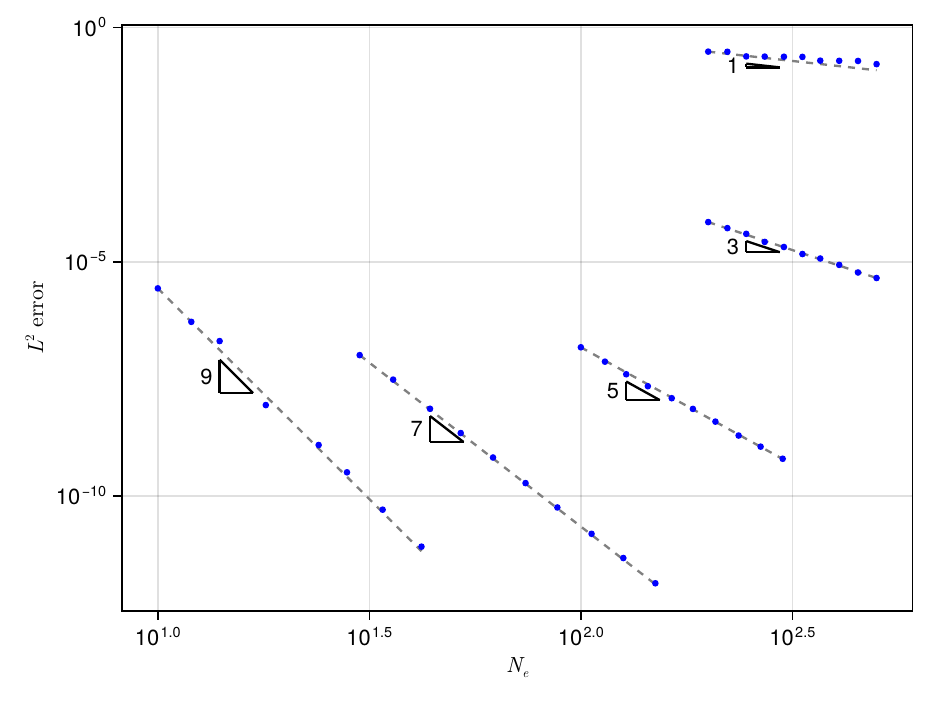}
	\end{subfigure}
	\hfill
	\begin{subfigure}{0.48\linewidth}
		\includegraphics[width=\linewidth]{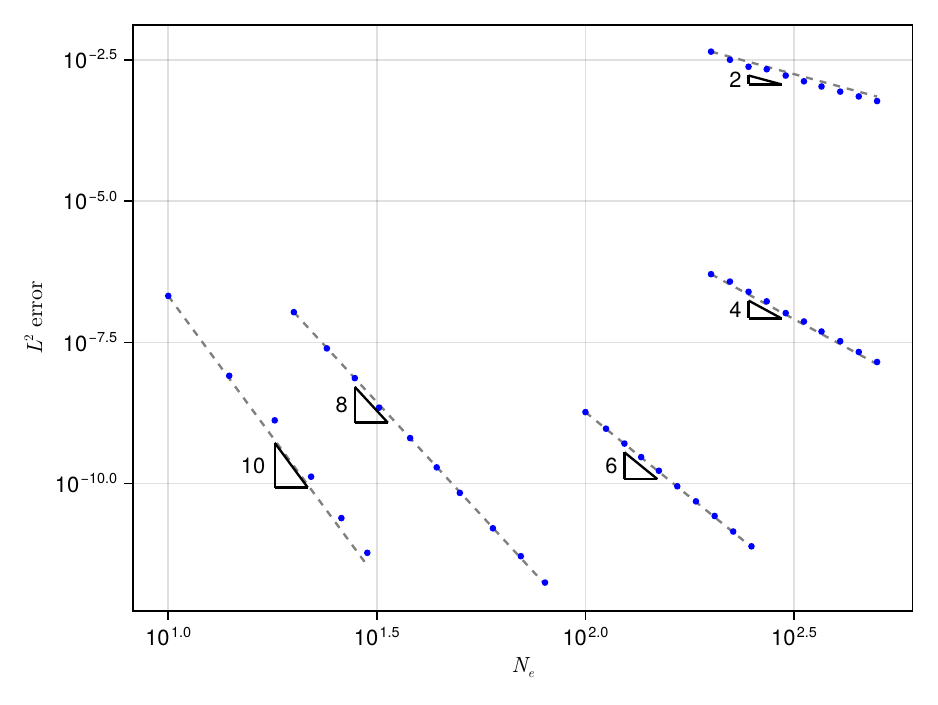}
	\end{subfigure}
	\caption{Consistency order of the implicit ADER-DG method: left odd orders for $N = 0,2,4,6,8$, right even orders for $N = 1,3,5,7,9$.}
	\label{fig:aderconsistencyimplict}
\end{figure}



\section{ALE ADER-DG methods on degenerate spacetime geometries}
\label{sec.degenerate}
In this section, we consider a modification of the ALE ADER-DG scheme on degenerate spacetime geometries, which are obtained by introducing in our computational domain \emph{fictitious spacetime degenerate elements}, which will be termed \emph{sliver} elements.

\medskip
\noindent
\emph{Geometry description.}

\noindent
To construct a \emph{degenerate} spacetime geometry we start from the classical geometry described in Figure~\ref{fig.geometry}. As in Figure~\ref{fig.degenerate}, we assume now that some of the interfaces $\Sigma_{i-1/2}^n$ are expanded into a \emph{sliver element} $S_{i-1/2}^n$, which represents our fictitious spacetime degenerate element. To construct a sliver element $S_{i-1/2}^n$, we first fix its maximal width $\Delta x_{i-1/2}^n > 0$ and then consider the quadrilateral whose vertices, in counter-clockwise order, are defined as
\begin{equation*}
	\begin{aligned}
		&(x_{i-1/2}^n, t^n), &( (x_{i-1/2}^n + x_{i-1/2}^{n+1} + \Delta x_{i-1/2}^n)/2, t^n + \Delta t^n/2), \\
		& (x_{i-1/2}^{n+1}, t^{n+1}), &( (x_{i-1/2}^n + x_{i-1/2}^{n+1} - \Delta x_{i-1/2}^n)/2, t^n + \Delta t^n/2).
	\end{aligned}
\end{equation*}
Each neighbouring control volume is redefined accordingly. The boundary $\partial S_{i-1/2}^n$ of each sliver element can be viewed as the union of $\Sigma_{i-1/2}^{n,-}$ (the interface between $S_{i-1/2}^n$ and $C_{i-1}^n$) and $\Sigma_{i-1/2}^{n,+}$ (the interface between $S_{i-1/2}^n$ and $C_{i}^n$). In case an interface $\Sigma_{i-1/2}^n$ is not replaced by a sliver element, then we set $\Sigma_{i-1/2}^{n,\pm} := \Sigma_{i-1/2}^n$.

\begin{figure}[t]
	\includegraphics[width=0.31\linewidth,height=0.25\textheight]{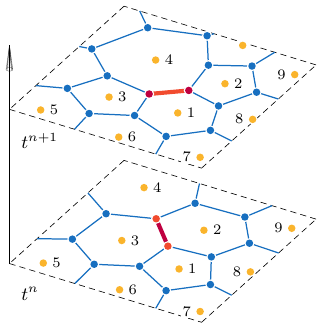}\hfill
	\includegraphics[width=0.31\linewidth,height=0.25\textheight]{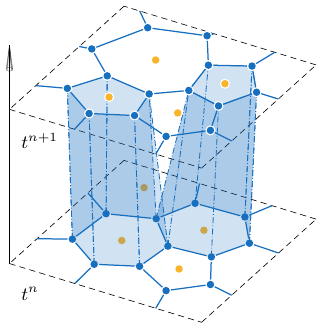} \hfill
	\includegraphics[width=0.31\linewidth,height=0.25\textheight]{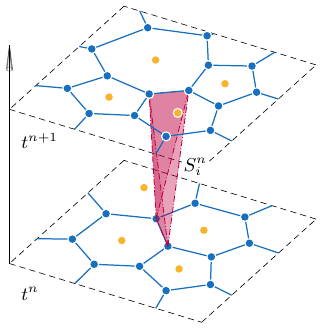}
	\caption{Spacetime connectivity with a topology change in two dimensions (left), induced hole-like sliver element (right) and neighbouring classical control volumes (middle).}
	\label{fig.sliver2d}
\end{figure}

\begin{figure}[t]
	\centering
	\begin{tikzpicture}
		
		\draw[black]   (0.00 , 0.00) --
		(6.00 , 0.00) --
		(5.75 , 2.00) --
		(6.75 , 4.00) --
		(0.25 , 4.00) --
		(0.50 , 2.00) --
		cycle;
		\draw[dotted] (-1.00, 0.00) -- (0.00, 0.00);
		\draw[dotted] ( 6.00, 0.00) -- (7.00, 0.00);
		\draw[dotted] ( 6.75, 4.00) -- (7.75, 4.00);
		\draw[dotted] (-0.75, 4.00) -- (0.25, 4.00);
		
		\draw[dotted] ( 0.00, 0.00) -- (-0.25, 2.00);
		\draw[dotted] (-0.25, 2.00) -- ( 0.25, 4.00);
		\draw[dotted] ( 6.00, 0.00) -- ( 7.00, 2.00);
		\draw[dotted] ( 7.00, 2.00) -- ( 6.75, 4.00);

		\draw[dotted] (4.25,2) -- (2.25,2);
		
		\draw[red] (3,0) -- (4.25,2) -- (3.5,4) -- (2.25,2) -- cycle;
		
		\node at  (1.5,-0.3) {$\Omega_{i-1}^n$};
		\node at  (4.5,-0.3) {$\Omega_{i}^n$};
		
		\node at  (2,4.3) {$\Omega_{i-1}^{n+1}$};
		\node at  (5.12,4.3) {$\Omega_{i}^{n+1}$};
		
		\node at (3,-0.3) {$x_{i-\frac12}^n$};
		\node at (3.5,4.3) {$x_{i-\frac12}^{n+1}$};
		
		\node at (3.4,2.5) {$S_{i-\frac12}^n$};
		\node at (1.5,1) {$C_{i-1}^n$};
		\node at (5,1) {$C_{i}^n$};
		
		\node at (4.33,3) {$\Sigma_{i-\frac12}^{n,+}$};
		\node at (2.3,3) {$\Sigma_{i-\frac12}^{n,-}$};
		\node at (0.88,2.33) {$\Sigma_{i-\frac32}^{n,+}$};
		\node at (6.2,1.6) {$\Sigma_{i+\frac12}^{n,-}$};
		
		\node at (3.2,1.72) {\small $\Delta x_{i-\frac12}^n$};
		
		\node at (7.3,0) {$t^n$};
		\node at (8.25,4) {$t^{n+1}$};
	\end{tikzpicture}
	\caption{Prototype configuration of a spacetime domain discretization with a degenerate (or sliver) element. At each time $t^n$ the spatial domain is subdivided into elements $\Omega_i^n$, connected in spacetime by control volumes $C_i^n$. Interfaces between control volumes are expanded into hole-like sliver elements $S_{i-1/2}^n$ with maximal width $\Delta x_{i-1/2}^n$, mimicking the 3d hole-like sliver elements described in Figure~\ref{fig.sliver2d}.}
	\label{fig.degenerate}
\end{figure}

\begin{remark}
	These degenerate elements mimic in $1$d the hole-like sliver elements introduced in \cite{gaburro2020high, gaburro2021bookchapter, gaburro2021unified, gaburro2025high} within the framework of $2$d moving meshes with topology changes, see Figure~\ref{fig.sliver2d}. Those works are all based on the use of a direct ALE approach for the evolution of the PDE and therefore on the integration over control volumes that connect in time two different meshes. When, in this context, the meshes do not merely move but can also change topology, meaning that connectivity and list of neighbours of each cell may change, the use of classical control volumes to connect the meshes is no longer sufficient. To cover the entire spacetime domain in between, it then becomes necessary to introduce additional volumes, such as those shown in the right panel of Figure~\ref{fig.sliver2d}, which, similarly to the $1$d degenerate elements of this paper, have zero area at both times $t^n$ and $t^{n+1}$, but a non zero spacetime volume. At present, the $2$d sliver elements have been used in a variety of applications, but a theoretical analysis of their properties has not yet been carried out in detail.
\end{remark}

\medskip
\noindent
\emph{Basis functions.}

\noindent
For each $n > 0$ and $i \in \{1,\dots,N_e\}$, we consider the already introduced families of basis functions $\{\phi_{i,\ell}^n\}_\ell$, $\{\theta_{i,\ell}^n\}_\ell$ and~$\{\psi_{i,\ell}^n\}_\ell$, see \eqref{eq.spacebasis}, \eqref{eq.spacetimebasis} and \eqref{eq.movingbasis}. Furthermore, we also consider over each sliver element $S_{i-1/2}^n$ the family of \emph{spacetime} basis function $\{\theta_{i-1/2,\ell}^n\}_\ell$ defined as
\begin{equation*}
	\begin{aligned}
		&\theta_{i-\frac12,\ell}^n \colon S_{i-\frac12}^n \to \R, \, (x,t) \mapsto \left(\frac{x - x_{i-\frac12}^n}{\Delta x_{i-\frac12}^n} \right)^{\ell_1} \, \left( \frac{t-t^n}{\Delta t^n}\right)^{\ell_2}
		\\[0.6ex]
		&\text{for } \ell = \ell_1 + \ell_2(2N + 3 - \ell_2)/2, \,\, 0 \leq \ell_1 + \ell_2 \leq N.
	\end{aligned}
\end{equation*}
These are nothing else than modal spacetime basis function centered at $(x_{i-1/2}^n, t^n)$ and scaled according to $\Delta x_{i-1/2}^n$ and $\Delta t^n$. Alongside the general approximants $u_i^n$ and $q_i^n$ described in \eqref{eq.un} and \eqref{eq.qn}, we also consider approximants defined on each sliver element $S_{i-1/2}^n$ to take the form of a spacetime polynomial function $q_{i-1/2}^n$ written as
\begin{equation}\label{eq.qn.sliver}
	q_{i-\frac12}^n(x,t) = \sum_{\ell = 0}^{N_{st}-1} \theta_{i-\frac12,\ell}^n (x, t) \hat{q}_{i-\frac12,\ell}^n \quad \text{for } (x,t) \in C_{i-\frac12}^n, \,i \in \{2,\dots, N_e\}, n > 0,
\end{equation}
where $\{ \hat{q}^{n}_{i-1/2,\ell} \}_{n,i,\ell} \subset \R$ are the degrees of freedom. Again, we write $\hat{q}_{i-1/2}^n$ to identify the corresponding vector of coefficients.

\subsection{Explicit ALE ADER-DG on degenerate geometries}
The explicit ALE ADER-DG scheme on degenerate geometries can be described as a three steps scheme, where the third additional step is devoted to the treatment of the newly introduced sliver elements.

\medskip
\noindent
\emph{Predictor step on control volumes.}

\noindent
The predictor step of the explicit ALE ADER-DG scheme on non-degenerate control volumes remains essentially the same as the one described in Section \ref{sec.predictor}. The only difference is the possible non-quadrilateral concave structure of some control volumes $C_i^n$ that should be accounted during the numerical integration. Thus, we can easily apply \eqref{eqn.predictor} to obtain each predictor $q_i^n$, $i = 1,\dots, N_e$.

\medskip
\noindent
\emph{Predictor step on sliver elements.}

\noindent
Once predictors on non-degenerate control volumes are computed, we consider the sliver elements. Let us fix a sliver element $S_{i-1/2}^n$ for some $i \in \{2,\dots,N_e\}$. On such an element, since we have no inflow information to work with at time $t^n$ (due to the sliver degeneracy), we perform a complete integration by parts that allows to introduce boundary fluxes through flux exchanges at the interfaces $\Sigma_{i-1/2}^{n,\pm}$. Hence, we follow the same derivation that led to \eqref{eq.implicit} and we seek for an approximant $q_{i-1/2}^{n}$ of the form \eqref{eq.qn.sliver} that solves
\begin{equation}\label{eqn.qnsliver}
	\begin{aligned}
		& \int_{\Sigma_{i-\frac12}^{n,-}}
		\theta_{i-\frac{1}{2},k}^n \mathcal{F}(q_{i-\frac{1}{2}}^{n},q_{i-1}^{n}, \mathbf{\hat n}_{i-\frac12}^n)\, \ds
		+ \int_{\Sigma_{i+\frac12}^{n,+}}
		\theta_{i-\frac12,k}^n  \mathcal{F}(q_{i-\frac12}^{n},q_{i}^{n}, \mathbf{\hat n}_{i-\frac12}^n)\, \ds \\
		&= \int_{S_{i-\frac12}^n} 
		\partial_t \theta_{i-\frac12,k}^n(x,t) q_{i-\frac12}^n(x,t) \,\dx \dt + \int_{S_{i-\frac12}^n}  \partial_x \theta_{i-\frac12,k}^n(x,t) f_{i-\frac12}^n(x,t) \,\dx \dt \quad  \text{for }k = 0, \dots, N_{st}-1,
	\end{aligned} 
\end{equation}
where $f_{i-1/2}^n$ is defined as in \eqref{eq.fhat} (by replacing the indices accordingly) and $\mathbf{\hat n}_{i-1/2}^n$ is the outward pointing normal to $\partial S_{i-1/2}^n$. By taking into account that predictors on the neighbouring non-degenerate control volumes have already been computed, we can approximate the only unknown $\hat q_{i-1/2}^{n}$ by means of solving the algebraic system in \eqref{eqn.qnsliver}.

\medskip
\noindent
\emph{Corrector step.}

\noindent
On each element $\Omega_{i}^{n+1}$, $i = 1,\dots,N_e$, the new approximant $u_i^{n+1}$ is then computed by means of \eqref{eqn.corrector}, with the needed substitutions of predictors and interfaces whenever the control volume neighbours a sliver element. We remark that no additional computations are required on sliver elements because they have zero measure boundary at time $t^{n+1}$. Moreover, the method is conservative by construction also around sliver elements thanks to the exchange of fluxes already performed in the computation of their predictors: indeed, the balance of the fluxes can be obtained by taking $\theta_{i-1/2,0}^n = 1$ as test function in \eqref{eqn.qnsliver}.

\subsection{Implicit ALE ADER-DG on degenerate geometries}\label{sec.ALEADERimplicitsliver}
To recover the implicit version of the ALE ADER-DG scheme on degenerate geometries, it is enough to consider together \eqref{eq.implicit} on non-degenerate elements and \eqref{eqn.qnsliver} on sliver elements, by paying attention to use the correct neighbours and interfaces in \eqref{eq.implicit}. Again, each new approximant $u_{i}^{n+1}$ is then recovered by means of an $L^2$ projection of $q_i^n(\cdot,t^{n+1})$ with respect to the basis $\{\phi_{i,\ell}^{n+1}\}_\ell$.

\subsection{Stability analysis with spacetime hole-like sliver elements}
\label{ssec.stability-degenerate}

The von Neumann stability analysis can be performed in the same fashion as outlined in Section~\ref{sec.stability-classical}. Again, we consider an initial data of the form $e^{\icomplex\kappa x}$.
We build our domain starting from a classical Eulerian geometry and replace every other vertical interface with a sliver like element, whose diameter is controlled by a parameter $\delta \in [0,0.5]$ (see Figure~\ref{fig:domain_geometry}). Now, we identify two subsequent elements $i-1$ and $i$ separated by a sliver to be our unitary periodic block.

\begin{SCfigure}[2.0][t]
	\begin{tikzpicture}[scale=1.5, baseline=(current bounding box.center)]
		\draw[white] (-4.75,-1) rectangle (-4.75,-1);
		
		\draw[black] (-4,-1) rectangle (0,1);
		
		\draw[black] (-2,1) -- (-1.5,0) -- (-2,-1) -- (-2.5,0) -- cycle;
		
		\node at (-3,-1.15) {\tiny{$\Delta x$}};
		\node at (-1,-1.15) {\tiny{$\Delta x$}};
		\node at (-4.25,0) {\tiny{$\Delta t$}};
		\draw[black, dotted] (-2.5,0) -- (-1.5,0);
		\node at (-2.0,0.1) {\tiny{$2 \delta \Delta x$}};
		
		\draw[->] (-1.75,0.5) -- ++(0.4,0.2);
		\node at (-1,0.75){\tiny{$\left(\frac{\Delta t}{2}, \delta\Delta x\right)$}};
		
		\draw[->] (-1.75,-0.5) -- ++(0.4,-0.2);
		\node at (-1.0,-0.75) {\tiny{$\left(\frac{\Delta t}{2}, -\delta\Delta x\right)$}};
		
		\draw[->] (-2.25,-0.5) -- ++(-0.4,-0.2);
		\node at (-3.05,-0.75) {\tiny{$\left(-\frac{\Delta t}{2}, -\delta\Delta x\right)$}};
		
		\draw[->] (-2.25,0.5) -- ++(-0.4,0.2);
		\node at (-3.0,0.75) {\tiny{$\left(-\frac{\Delta t}{2}, \delta\Delta x\right)$}};
		
		\node at (-0.9,0.0) {\tiny{$C_i^n$}};
		\node at (-3.1,0.0) {\tiny{$C_{i-1}^n$}};
	\end{tikzpicture}
	
	\caption{Periodic building block for the von Neumann analysis: we start from a classical Eulerian geometry and replace every other vertical interface with a sliver-like element, whose diameter is controlled by a parameter $\delta \in [0,0.5]$. Up to scaling, outgoing normal vectors from the sliver-like element take the form reported in the picture.}
	\label{fig:domain_geometry}
\end{SCfigure}

\medskip
\noindent
\emph{Explicit method.}

\noindent
We study the update of the couple $\hat v_i^n = (\hat u_{i-1}^n, \hat u_i^n)$, which encodes the coefficients of the approximate solution on the given periodic block. While the derivation is somehow cumbersome, we can easily prove that the vector $\hat v_i^n \in \R^{2(N+1)}$ follows an update rule of the form
\[
\hat v_i^{n+1} = B_N(\CFL, \delta, \theta) \cdot \hat v_i^n,
\]
with an amplification matrix $B_N(\CFL, \delta, \theta)\in \mathbb R^{2(N+1)\times 2(N+1)}$ dependent only on $\theta = \kappa \Delta x$, on the parameter $\delta$ controlling the width of the sliver and on $\CFL = a\Delta t/\Delta x$. Thus, for a fixed couple $(\CFL, \delta)$, a necessary condition for the update formula for $\hat{v}_i^n$ to be stable, regardless of the value of $\Delta x$, is
\begin{equation}\label{eq.stabilityconditionsliver}
	\rho(B_N(\CFL, \delta, \theta)) \leq 1   \quad \text{for every } \theta \in [0,\pi].
\end{equation}
Following the same approach as in Section \ref{sec.stability-classical}, we introduce, for each $m \in \N$, the discrete amplification factor
\begin{equation}\label{eq.discretestabilityconditionsliver}
	\rho_{N,m}^\delta(c) = \max_{k = 1,\dots,2^{m}+1} \rho\left(B_N\left(c, \delta, \frac{k-1}{2^{m}}\pi\right)\right) \quad \text{for } c \in [0,1], \delta \in [0,0.5].
\end{equation}

We report in Figure~\ref{fig:stabilitysliverexplicitconfronto} the function $\rho_{N,6}^\delta-1$ for varying $\delta = 0, 0.002, 0.004, 0.006, 0.008$. We observe that the discrete amplification factor for $\delta \approx 0$ always lies below the classical amplification factor $\rho_{N,6} = \rho_{N,6}^0$. Hence, the introduction of the sliver element does not increase the amplification factor of the classical geometry, i.e., the same $\CFL$ bounds that we deem acceptable in the classical setting can be extended to the degenerate setting. We report the full analysis in Figure~\ref{fig:stabilitysliverexplicit}, where we identify the boundary at which $\rho_{N,6}^\delta$ crosses the level $1+10^{-3}$ (according to the strategy employed to determine the second row in Table~\ref{PNPN}). We observe how the introduction of the hole-like sliver elements is slightly increasing the maximal acceptable $\CFL$ value: this result is probably a consequence of the implicit treatment of the hole-like element. Indeed, the predictor step on the hole-like sliver element is purely implicit, introducing a locally implicit step in our globally explicit scheme.

\begin{figure}[t]
	\centering
	\includegraphics[width=\linewidth]{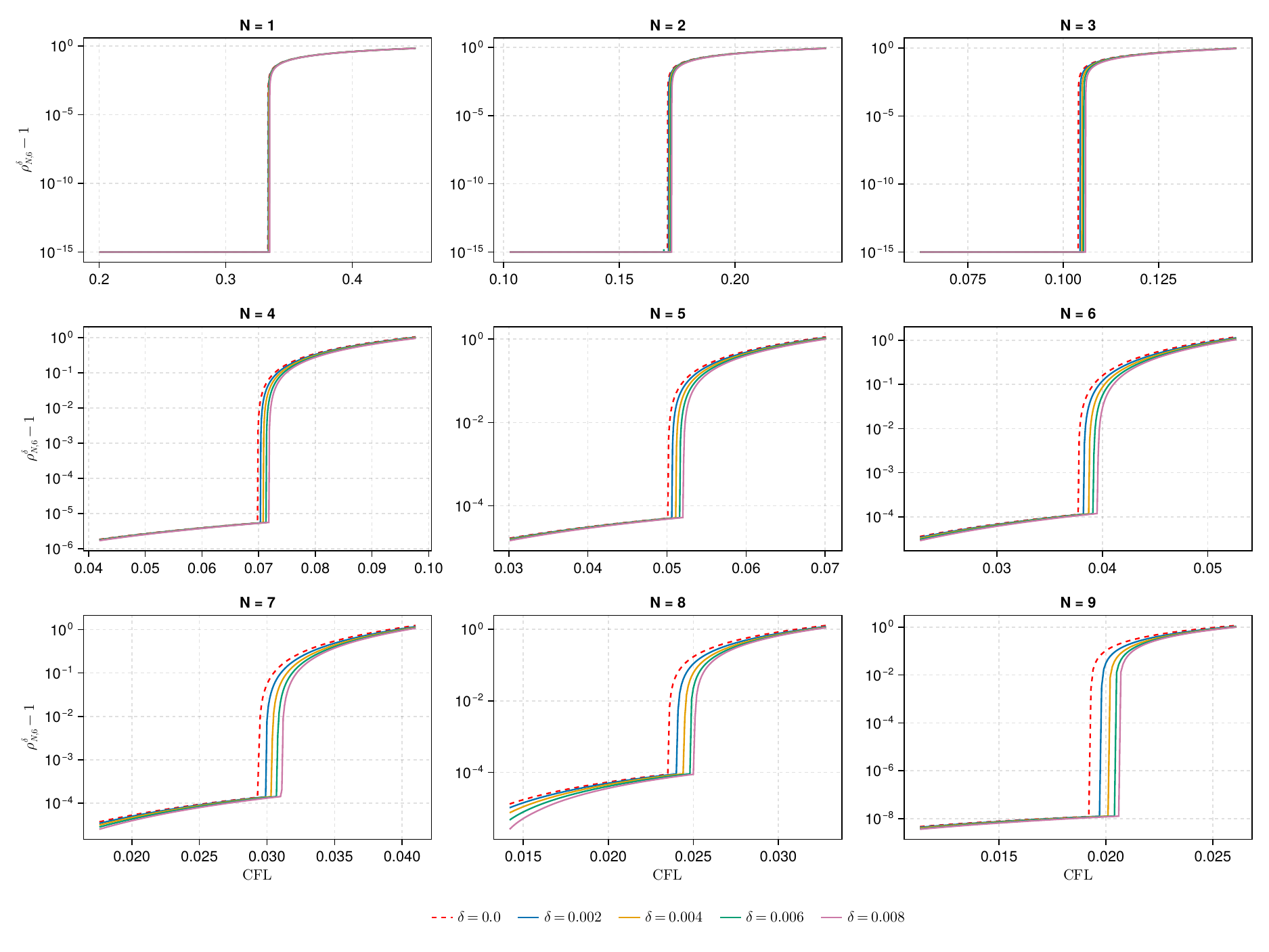}
	\caption{Study of the amplification factor for the explicit ADER-DG method on degenerate geometries. In each plot, we display the function $\CFL \mapsto \rho_{N,6}^0(\CFL) - 1 = \rho_{N,6}(\CFL) - 1$ and the functions $\CFL \mapsto \rho_{N,6}^\delta(\CFL) - 1$ for $\delta = 0.002, 0.004, 0.006, 0.008$. For each $\CFL$ the amplification factor the degenerate setting is always smaller that the one in the classical setting.}
	\label{fig:stabilitysliverexplicitconfronto}
\end{figure}

\begin{figure}[t]
	\centering
	\includegraphics[width=\linewidth]{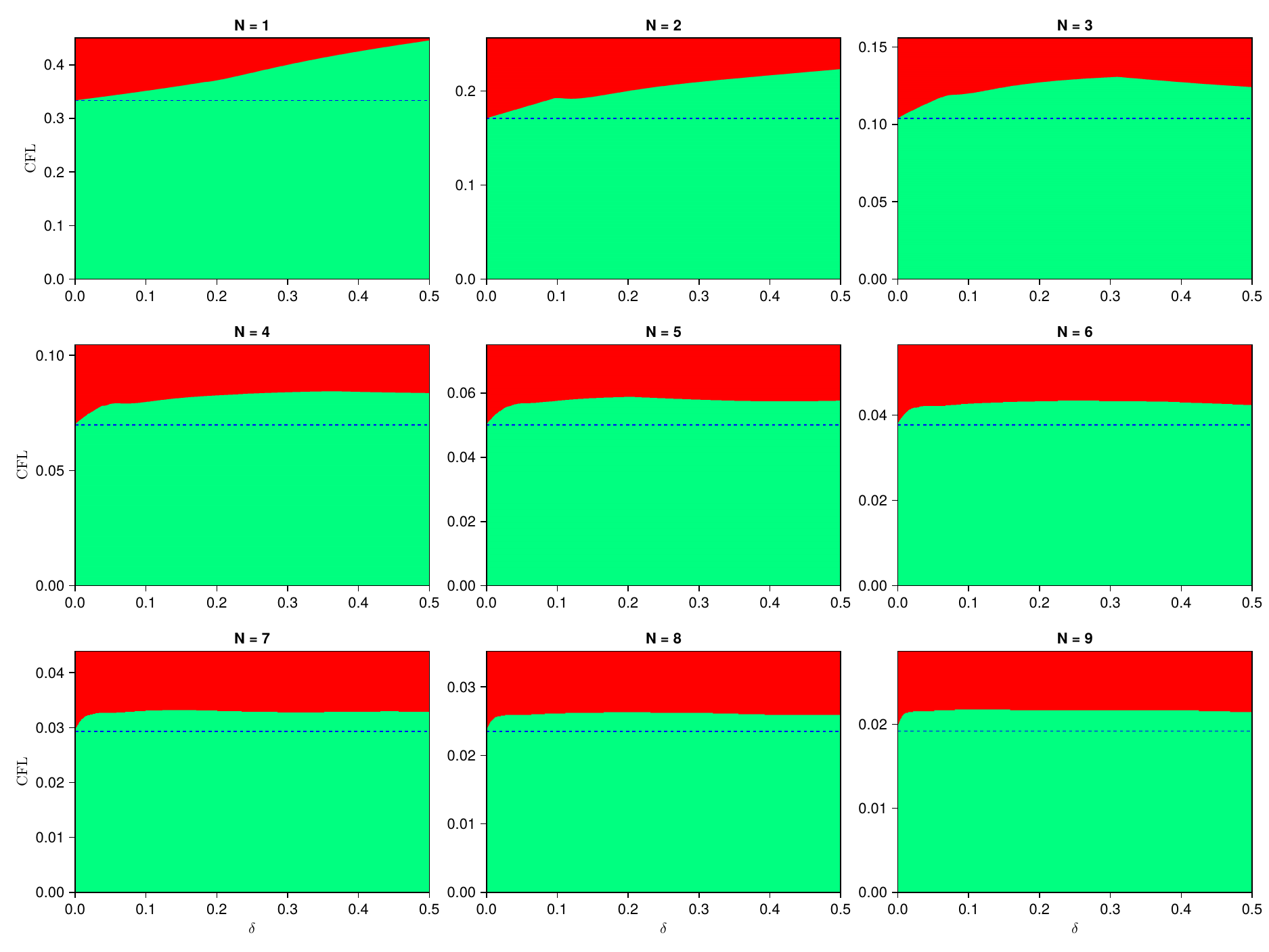}
	\caption{Stability study of the explicit ADER-DG method with slivers. The blue dotted lines are set at the $\CFL_{\rho \leq 1+1\es{-3}}$ values reported in the second row of Table~\ref{PNPN}. Then, in each plot, a point $(\delta, \CFL)$ is either green or red depending on whether $\rho_{N,6}^\delta(\CFL) \leq 1 + 10^{-3}$ or not. We deduce that classical $\CFL$ numbers can be used even in presence of sliver elements.}
	\label{fig:stabilitysliverexplicit}
\end{figure}

\medskip
\noindent
\emph{Implicit method.}

\noindent 
The same von Neumann stability analysis can be developed for the implicit ADER-DG method, by focusing again on the update formula for the couple $\hat v_i^n = (\hat u_{i-1}^n, \hat u_i^n)$ and obtaining the corresponding amplification matrix $B_N(\CFL, \delta, \theta)$.
We evaluate condition \eqref{eq.stabilityconditionsliver} by means of computing $\rho_{N,6}^\delta$, see \eqref{eq.discretestabilityconditionsliver}, on the discrete sets
$
C = \{c = k \cdot 10^{-2} \mid k \in \N \} \cap [0, 10] \text{ and } \Delta = \{\delta = k \cdot 10^{-2} \mid k \in \N \} \cap [0,0.5].
$
We obtain the following:
\begin{center}
	\resizebox{\textwidth}{!}{%
		\begin{tabular}{c|ccccccccc}
			& $N=1$ & $N=2$ & $N=3$ & $N=4$ & $N=5$ & $N=6$ & $N=7$ & $N=8$ & $N=9$ \\ \hline \\[-10pt]
			$\max\limits_{c \in C, \delta \in \Delta} \rho_{N,6}^\delta(c) - 1$ & 3.99\es{-15} & 5.55\es{-15} & 7.55\es{-15} & 2.15\es{-14} & 1.27\es{-13} & 2.01\es{-12} & 1.35\es{-10} & 4.44\es{-9}& 1.21\es{-7} \\ 
		\end{tabular}.%
	}
\end{center}
As is the case in the classical setting, no clear $\CFL$-type bounds can be identified. The largest discrepancy for increasing $N$ can be related to the growing dimension of the matrices involved in the computations, which scale as $3 N_{st} \times 3 N_{st}$ and reach $165 \times 165$ for $N = 9$. These results suggest the unconditional stability of the method. Indeed, following the same idea outlined in the stability proofs in \cite{dumbser2016space, yan2002local1, yan2002local2}, we can actually prove that the method is $L^2$ stable in several configurations of interest.

\begin{theorem}
	Consider the linear advection equation
	\[
	\partial_t u(t,x) + \partial_x u(t,x) = 0 \quad \text{for } x\in(0,2\pi), t>0,
	\]
	coupled with periodic boundary conditions and initial datum $u(x,0) = e^{\icomplex\kappa x}$, $\kappa \in \N$. Consider over $[0,2\pi]$ a uniform Eulerian grid of an even number $N_e > 0$ of elements of size $\Delta x > 0$. Replace every other interface with a hole-like sliver element of size $2\delta \Delta x$, $\delta \in (0,1)$ (see Figure~\ref{fig:domain_geometry}). Consider the Rusanov-type numerical flux \eqref{eq.rusanov}. Then, when applied to this problem, the implicit ADER-DG method with slivers as described in Section \ref{sec.ALEADERimplicitsliver} is $L^2$ stable provided $$\delta \leq \frac12 \min\left\{ \frac{\Delta t}{\Delta x}, 1\right\}.$$
\end{theorem}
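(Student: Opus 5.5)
The plan is to prove $L^2$ stability with an energy argument, in the spirit of \cite{dumbser2016space, yan2002local1, yan2002local2}. In the implicit scheme of Section~\ref{sec.ALEADERimplicitsliver}, every spacetime test space on $C_i^n$ and on $S_{i-1/2}^n$ contains the trial polynomial itself, since the basis is the full space of total degree $N$. So in \eqref{eq.implicit} and in \eqref{eqn.qnsliver} (with $f(q)=q$) we may take as test function $\bar q_i^n$ on each $C_i^n$ and $\bar q_{i-1/2}^n$ on each sliver, take real parts, and sum over all elements of the periodic domain. We work with complex-valued data and $|\cdot|^2$; since the method is linear, the von Neumann ansatz is not even needed.

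\emph{Local energy identities.} On a generic spacetime polygon $P$ with outward normal $(\hat n_x,\hat n_t)$ we have $\mathrm{Re}\int_P \bar q(\partial_t q+\partial_x q) = \frac12\int_{\partial P}|q|^2(\hat n_t+\hat n_x)\,\ds$. On $C_i^n$ the integrated-by-parts volume terms of \eqref{eq.implicit} therefore contribute the following pieces. First, $\frac12\|q_i^n(\cdot,t^{n+1})\|^2$ at the top. Second, the bottom term $-\mathrm{Re}\int \bar q_i^n u_i^n + \frac12\|q_i^n(\cdot,t^n)\|^2$, which satisfies $\geq -\frac12\|u_i^n\|^2$ by the elementary inequality $\frac12|a|^2-\mathrm{Re}(\bar a b)\ge -\frac12|b|^2$; this reproduces the upwind-in-time jump dissipation $\frac12\|q_i^n(t^n)-u_i^n\|^2$. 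Third, lateral contributions on every face $\Sigma$, namely $\frac12|q^-|^2 (\hat n_t+\hat n_x)$ plus the flux term. The same holds on the sliver, which has no top or bottom faces.

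\emph{Interface terms.} For the upwind flux $\mathcal F(q^-,q^+,\hat n)=(\hat n_x+\hat n_t)\,q^{\mathrm{up}}$ with $\lambda:=\hat n_x+\hat n_t$, we sum the contributions of the two sides of a face. The standard computation gives, per face, $-\mathrm{Re}[\bar q^-\mathcal F]$-type terms that combine into $\frac{|\lambda|}{2}|q^--q^+|^2\geq 0$ of dissipation, with a sign determined by upwinding. The key requirement is that the upwind direction is computed consistently: on every face the upwind side must be the side from which characteristics (velocity $1$) enter. On vertical faces this is automatic. On the four sliver faces, with normals proportional to $(\pm\Delta t/2,\ \pm\delta\Delta x)$ in $(x,t)$ components as in Figure~\ref{fig:domain_geometry}, the characteristic speed relative to the face is $1-v$ with face slope $|v|=2\delta\Delta x/\Delta t$. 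We therefore obtain that $\lambda$ has a fixed sign on the left faces (inflow into the sliver) and the opposite sign on the right faces (outflow), provided $2\delta\Delta x\le \Delta t$. This is exactly the first part of the hypothesis, and it ensures that the sliver receives data only from $C_{i-1}^n$ and passes it to $C_i^n$. The condition $\delta\le\frac12$ keeps the sliver inside the neighbouring cells, so that the geometry is admissible and the $C_i^n$ are genuine polygons.

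\emph{Conclusion.} Summing everything yields $\sum_i \frac12\|q_i^n(t^{n+1})\|^2 + (\text{nonnegative dissipation}) \le \sum_i\frac12\|u_i^n\|^2$. Since $u_i^{n+1}$ is the $L^2$ projection of $q_i^n(\cdot,t^{n+1})$, we have $\|u^{n+1}\|\le\|u^n\|$. The main obstacle is the face bookkeeping: one must verify that the Rusanov flux \eqref{eq.rusanov} reduces to a pure upwind flux with the right sign on each slanted face. Here $s_{\max}=|1-v||\hat n_x|$ equals $|\lambda|$ only if we track the normalisation correctly. One must also check that no leftover boundary terms remain where the sliver faces meet the concave corners of $C_{i-1}^n$ and $C_i^n$. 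I would first verify the sign of $\lambda$ face by face using the four normals in Figure~\ref{fig:domain_geometry}.
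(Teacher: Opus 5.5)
Your proposal is correct and is essentially the paper's argument. You test \eqref{eq.implicit} and \eqref{eqn.qnsliver} with the solution itself, use the divergence theorem to produce the time-jump term $\frac12\|q_i^n(\cdot,t^n)-u_i^n\|^2$ and the upwind interface terms, and use $2\delta\Delta x\le\Delta t$ to fix the upwind direction on the sliver faces. The only difference is bookkeeping: you sum over the whole periodic mesh and collect a per-face dissipation $\frac{|\lambda|}{2}|q^{-}-q^{+}|^2$ with complex conjugates, whereas the paper derives three local inequalities on the triplet $(C_{i-1}^n,S_{i-1/2}^n,C_i^n)$ and telescopes.
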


\begin{proof}
	
	We recall the construction in Figure~\ref{fig:domain_geometry}.
	We consider again a periodic degenerate geometry where one every other interface is replaced by a sliver element. We focus on a triplet of elements $(C_{i-1}^n, S_{i-1/2}^n, C_i^n)$, with $\delta \in (0,0.5)$.
	Up to renormalization, the normals to the four inner sides of the sliver have the coordinates reported in Figure~\ref{fig:domain_geometry}.
	
	First, we observe that under the assumption $f(q) = q$, the Rusanov-type ALE flux \eqref{eq.rusanov} can be simplified into
	\[
	\mathcal{F}(q^-, q^+, \mathbf{\hat n}) =  
	\frac{1}{2} \left[ ( q^{+} + q^{-} )(\mathbf{\hat n}_x + \mathbf{\hat n}_t) - (q^{+} - q^{-})|\mathbf{\hat n}_x + \mathbf{\hat n}_t| \right],
	\]
	meaning that either $q^{+}$ or $q^{-}$ is selected depending on the sign of $(\mathbf{\hat n}_x + \mathbf{\hat n}_t)$. In particular, given that $\delta \leq \frac12 \frac{\Delta t}{\Delta x}$, in the subsequent steps the numerical flux will always select $q_{i-1}$ when evaluated along $\Sigma_{i-1/2}^{n,-}$ and $q_{i-1/2}^n$ when evaluated along $\Sigma_{i-1/2}^{n,+}$, resulting into a pure upwind scheme from the left to the right element.
	
	For given continuous spacetime functions $u,v$ and for all meaningful indices $i,n$, we define for convenience the following pairings:
	\[
	\begin{aligned}
		&\begin{aligned}
			[w, v]_i^n &:= \int_{\Omega_i^n} w(x, t^n) v(x, t^n) \, \dx, \qquad& \{w, v\}_{i-1/2}^{n} &:= \int_{t^n}^{t^{n+1}} w(x_{i-1/2}, t) v(x_{i-1/2}, t) \, \dt, \\
			\langle w, v \rangle_i^n &:= \int_{C_i^n} w(x,t) v(x,t) \, \dx \dt, \qquad& \langle w, v \rangle_{i-1/2}^n &:= \int_{S_{i-1/2}^n} w(x,t) v(x,t) \, \dx \dt,
		\end{aligned}\\
		&\{w, v\}_{i-1/2}^{n,\pm} := \int_{\Sigma_{i-1/2}^{n,\pm}} w v (\mathbf{n}_t + \mathbf{n}_x) \, \ds \quad \text{where } \mathbf{n} \text{ is the outer normal to $\partial S_{i-1/2}^n$}.
	\end{aligned}
	\]
	Consider now \eqref{eq.implicit} on the element $C_i^n$: by recalling that we obtained each initial status as $u_i^n(x) = q_i^{n-1}(x,t^n)$ for $x \in \Omega_i^n$, plugging in as test function $q_i^n$ itself into \eqref{eq.implicit}, we obtain
	\begin{equation*}
		[q_i^n, q_i^n]_i^{n+1} - [q_i^n, q_i^{n-1}]_i^n - \langle \partial_t q_i^n + \partial_x q_i^n, q_i^n \rangle_i^n - \{q_i^n, q_{i-1/2}^n\}_{i-1/2}^{n,+} + \{q_i^n, q_i^n\}_{i+1/2}^{n} = 0.
	\end{equation*}
	By using that $\partial_t ((q_i^n)^2) = 2q_i^n\partial_t q_i^n$ and $\partial_x ((q_i^n)^2) = 2q_i^n\partial_x q_i^n$, and applying the divergence theorem, we get
	\begin{equation*}
		\begin{split}
			&[q_i^n, q_i^n]_i^{n+1} - [q_i^n, q_i^{n-1}]_i^n - \frac{1}{2}[q_i^n, q_i^n]_i^{n+1} + \frac{1}{2}[q_i^n, q_i^n]_i^n + \frac{1}{2}\{q_i^n, q_i^n\}_{i-1/2}^{n,+} - \frac{1}{2}\{q_i^n, q_i^n\}_{i+1/2}^{n} \\[2pt]
			&- \{q_i^n, q_{i-1/2}^n\}_{i-1/2}^{n,+} + \{q_i^n, q_i^n\}_{i+1/2}^{n} = 0.
		\end{split}
	\end{equation*}
	By multiplying by $2$, adding and subtracting $[q_i^{n-1}, q_i^{n-1}]_i^n$ and $\{q_{i-1/2}^n,q_{i-1/2}^n\}_{i-1/2}^{n,+}$, and rearranging the terms, we obtain
	\begin{equation*}
		\begin{aligned}
			&[q_i^n, q_i^n]_i^{n+1} - [q_i^{n-1}, q_i^{n-1}]_i^n + [q_i^n - q_i^{n-1}, q_i^n - q_i^{n-1}]_i^n \\[2pt]
			&+ \{q_i^n, q_i^n\}_{i+1/2}^{n} + \{1, (q_i^n - q_{i-1/2}^n)^2\}_{i-1/2}^{n,+} - \{q_{i-1/2}^n, q_{i-1/2}^n\}_{i-1/2}^{n,+} = 0.
		\end{aligned}
	\end{equation*}
	Given our choice of $\delta$, we can easily see that $\{1, (q_i^n - q_{i-1/2}^n)^2\}_{i-1/2}^{n,+} \geq 0$, hence, from the equality above, we get the inequality
	\begin{equation}\label{eq.ineq1}
		\begin{aligned}
			[q_i^n, q_i^n]_i^{n+1} - [q_i^{n-1}, q_i^{n-1}]_i^n + \{q_i^n, q_i^n\}_{i+1/2}^{n} - \{q_{i-1/2}^n, q_{i-1/2}^n\}_{i-1/2}^{n,+} \leq 0.
		\end{aligned}
	\end{equation}
	
	Consider now the sliver element $S_{i-1/2}^n$ and the corresponding relation \eqref{eqn.qnsliver}: by using as test function $q_{i-1/2}^n$ itself, we readily obtain
	\begin{equation*}
		-\langle \partial_t q_{i-1/2}^n + \partial_x q_{i-1/2}^n, q_{i-1/2}^n \rangle_{i-1/2}^n + \{q_{i-1/2}^n, q_{i-1/2}^n\}_{i-1/2}^{n,+} + \{q_{i-1/2}^n, q_{i-1}^n\}_{i-1/2}^{n,-} = 0.
	\end{equation*}
	By applying again the divergence theorem to the first term, multiplying by $2$, adding and subtracting $\{q_{i-1}^n, q_{i-1}^n\}_{i-1/2}^{n,-}$ and rearranging the terms, we derive 
	\begin{equation*}
		\{q_{i-1/2}^n, q_{i-1/2}^n\}_{i-1/2}^{n,+} - \{1, (q_{i-1/2}^n - q_{i-1}^n)^2\}_{i-1/2}^{n,-} + \{q_{i-1}^n, q_{i-1}^n\}_{i-1/2}^{n,-} = 0.
	\end{equation*}
	By our choice of $\delta$, we have $\{1, (q_{i-1/2}^n - q_{i-1}^n)^2\}_{i-1/2}^{n,-} \leq 0$, leading to
	\begin{equation}\label{eq.ineq2}
		\begin{aligned}
			\{q_{i-1/2}^n, q_{i-1/2}^n\}_{i-1/2}^{n,+} + \{q_{i-1}^n, q_{i-1}^n\}_{i-1/2}^{n,-} \leq 0.
		\end{aligned}
	\end{equation}
	On the left element $C_{i-1}^n$, the same derivation done on $C_i^n$ applies, and we obtain
	\begin{equation}\label{eq.ineq3}
		\begin{aligned}
			[q_{i-1}^n, q_{i-1}^n]_{i-1}^{n+1} - [q_{i-1}^{n-1}, q_{i-1}^{n-1}]_{i-1}^n - \{q_{i-1}^n, q_{i-1}^n\}_{i-1/2}^{n,-} -  \{q_{i-2}^n, q_{i-2}^n\}_{i-3/2}^{n} \leq 0.
		\end{aligned}
	\end{equation}
	By taking the sum of \eqref{eq.ineq1}, \eqref{eq.ineq2} and \eqref{eq.ineq3}, we get
	\begin{equation*}
		\begin{split}
			&[q_i^n, q_i^n]^{n+1} - [q_i^{n-1}, q_i^{n-1}]^n + [q_{i-1}^n, q_{i-1}^n]^{n+1} - [q_{i-1}^{n-1}, q_{i-1}^{n-1}]^n + \{q_i^n, q_i^n\}_{i+1/2}^{n} - \{q_{i-2}^n, q_{i-2}^n\}_{i-3/2}^{n} \le 0.
		\end{split}
	\end{equation*}
	We sum now over all positive indices $i$ and over time, by assuming zero inflow/outflow conditions at the boundary, and obtain
	\begin{equation*}
		\int_{\Omega} (q^n(x, t^{n+1}))^2 - (q^0(x, 0))^2 \, \dx \le 0,
	\end{equation*}
	which concludes the proof.
\end{proof}

\subsection{Numerical consistency with spacetime hole-like sliver elements}
\label{ssec.convergence-degenerate}

\begin{figure}[t]
	\centering
	\begin{subfigure}{0.48\linewidth}
		\includegraphics[width=\linewidth]{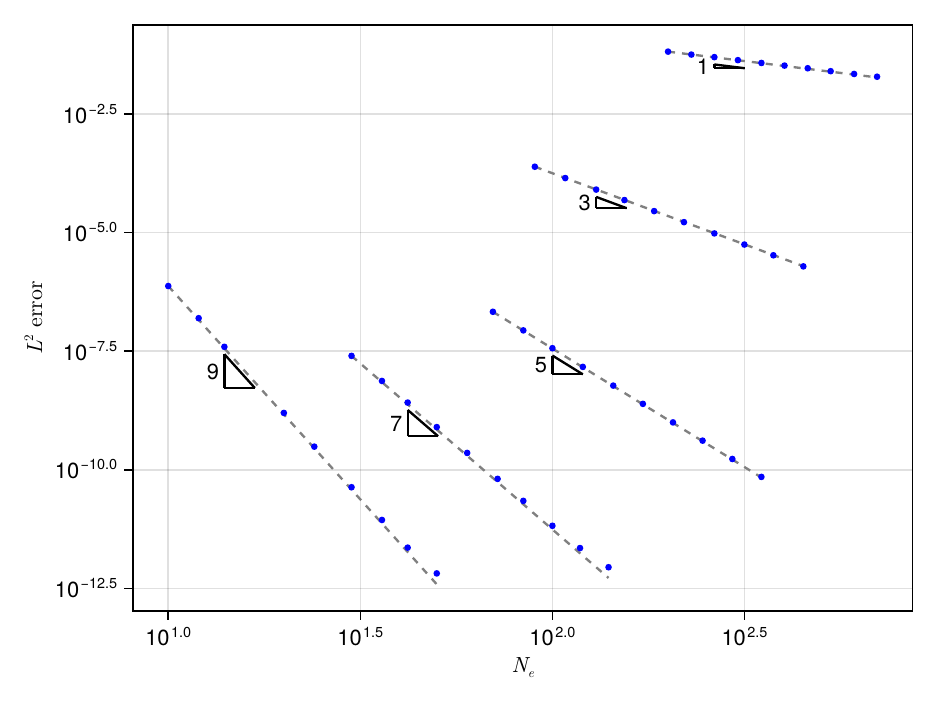}
	\end{subfigure}
	\hfill
	\begin{subfigure}{0.48\linewidth}
		\includegraphics[width=\linewidth]{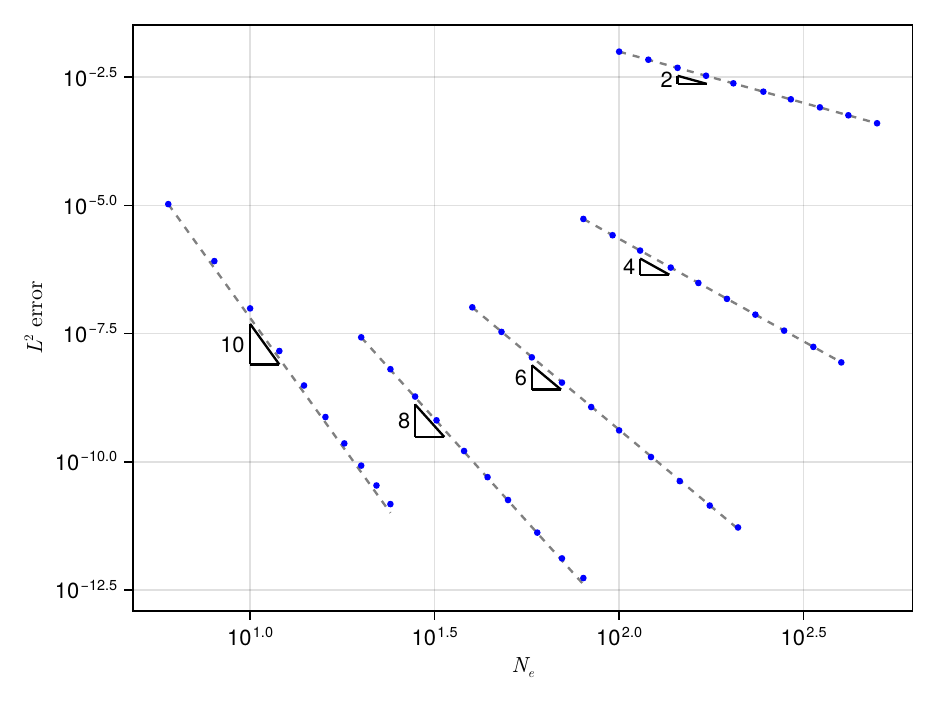}
	\end{subfigure}
	
	\begin{subfigure}{0.48\linewidth}
		\includegraphics[width=\linewidth]{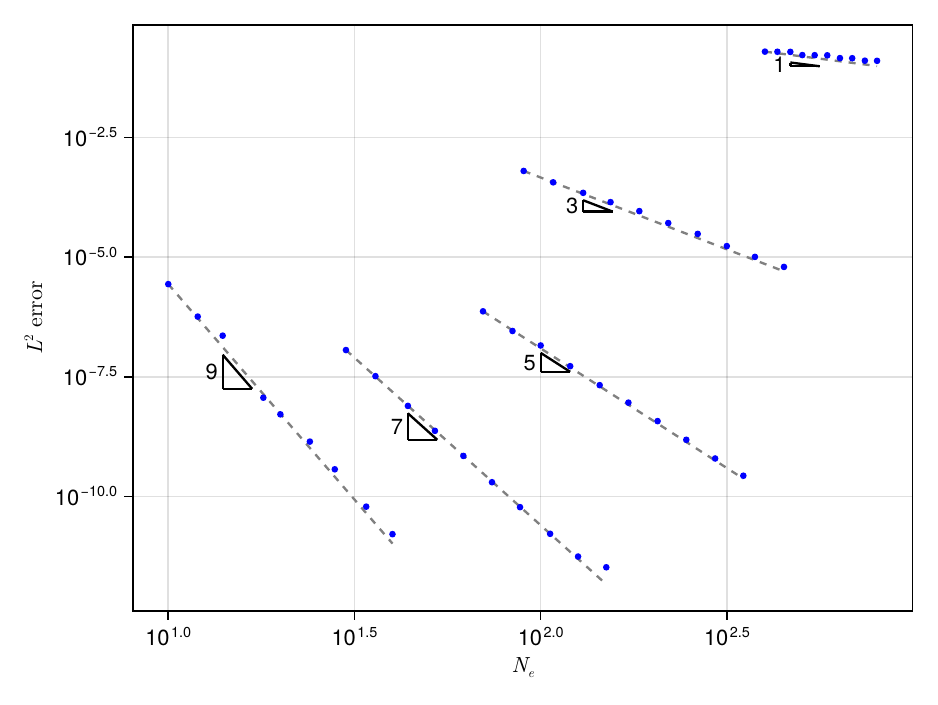}
	\end{subfigure}
	\hfill
	\begin{subfigure}{0.48\linewidth}
		\includegraphics[width=\linewidth]{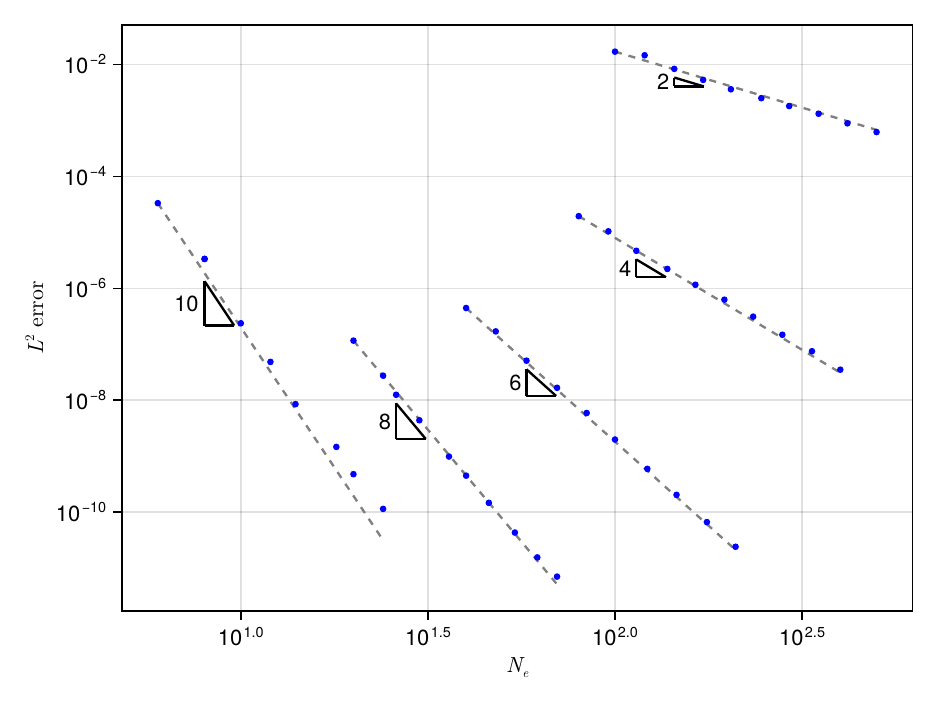}
	\end{subfigure}
	
	\caption{Consistency order of the explicit (top) and the implicit (bottom) ADER-DG method with slivers: left odd orders for $N = 0,2,4,6,8$, right even orders for $N = 1,3,5,7,9$.}
	\label{fig:sliverconsistency}
\end{figure}

As both the explicit and the implicit ADER-DG method on degenerate geometries are derived as generalizations of the ones on classical geometries, we expect them to have order of consistency $N+1$ for a given polynomial degree $N$. To verify this property we repeat the same numerical analysis of Sections~\ref{ssec.convergence-classical-explicit} and \ref{ssec.convergence-classical-implicit}, over a domain where we insert a sliver element at each interface (except at the boundary), with $\delta = 0.2$. As in the classical setting, we set a $\CFL$ at $0.9 \cdot \CFL_{\textup{max}}$ for the explicit method and at $10 \cdot \CFL_{\textup{max}}$ for the implicit method, where $\CFL_{\textup{max}}$ is taken from the second row of Table~\ref{PNPN}.
In Figure~\ref{fig:sliverconsistency}, we report $L^2$ errors with respect to the exact solution at the respective final times ($T = 12$ for the explicit and $T = 1$ for the implicit). Each consistency order is correctly achieved as expected.

\section{Conclusions and outlook to future works}
\label{sec.conclusions}

In this paper, we studied the von Neumann stability and the consistency of the family of ADER-DG methods for $N = 1,\dots,9$, considering both explicit and implicit formulations, all within the ALE framework. In particular, we showed that the use of degenerate spacetime geometries leads to some equivalent $\CFL$-type stability bounds as those governing stability in the case of classical geometries. Therefore, elements with zero spatial size do not lead to a reduction of the time step. This result is also important because it provides a theoretical foundation for the use of degenerate elements to connect moving meshes in multiple dimensions, and it represents a natural starting point for the construction of new spacetime cut cell-based methods \cite{may2017explicit, may2024accuracy}.


\section*{Acknowledgments}

M.~Bonafini is member of the INdAM GNAMPA group in Italy; 
D.~Torlo and E.~Gaburro are members of the INdAM GNCS group in Italy. 
E.~Gaburro and M.~Bonafini gratefully acknowledge the support received from the European Union 
with the ERC Starting Grant \emph{ALcHyMiA} (grant agreement No. 101114995).
Views and opinions expressed are however those of the author only and do not necessarily 
reflect those of the European Union or the European Research Council Executive Agency. 
Neither the European Union nor the granting authority can be held responsible for them.


\bibliographystyle{abbrv}
\bibliography{literature}

@book{leveque2007finite,
	title={Finite difference methods for ordinary and partial differential equations: steady-state and time-dependent problems},
	author={LeVeque, Randall J},
	year={2007},
	publisher={SIAM}
}

@article{dumbser2008unified,
  title={{A unified framework for the construction of one-step finite volume and discontinuous Galerkin schemes on unstructured meshes}},
  author={Dumbser, M. and Balsara, D.S. and Toro, E.F. and Munz, C.-D.},
  journal={Journal of Computational Physics},
  volume={227},
  number={18},
  pages={8209--8253},
  year={2008},
  publisher={Elsevier}
}

@article{gaburro2020high,
  title={High order direct {Arbitrary-Lagrangian-Eulerian schemes on moving Voronoi meshes with topology changes}},
  author={Gaburro, E. and Boscheri, W. and Chiocchetti, S. and Klingenberg, C. and Springel, V. and Dumbser, M.},
  journal={Journal of Computational Physics},
  volume={407},
  pages={109167},
  year={2020},
  publisher={Elsevier}
}

@article{gaburro2021posteriori,
  title={{A Posteriori Subcell Finite Volume Limiter for General PNPM Schemes: Applications from Gasdynamics to Relativistic Magnetohydrodynamics}},
  author={Gaburro, E. and Dumbser, M.},
  journal={Journal of Scientific Computing},
  volume={86},
  number={3},
  pages={1--41},
  year={2021},
  publisher={Springer}
}

@article{gaburro2021unified,
  title={{A unified framework for the solution of hyperbolic PDE systems using high order direct Arbitrary-Lagrangian--Eulerian schemes on moving unstructured meshes with topology change}},
  author={Gaburro, E.},
  journal={Archives of Computational Methods in Engineering},
  volume={28},
  number={3},
  pages={1249--1321},
  year={2021},
  publisher={Springer}
}

@article{fambri2020discontinuous,
  title={{Discontinuous Galerkin methods for compressible and incompressible flows on space--time adaptive meshes: toward a novel family of efficient numerical methods for fluid dynamics}},
  author={Fambri, F.},
  journal={Archives of Computational Methods in Engineering},
  volume={27},
  number={1},
  pages={199--283},
  year={2020},
  publisher={Springer}
}

@article{chiocchetti2021high,
  title={{High order ADER schemes and GLM curl cleaning for a first order hyperbolic formulation of compressible flow with surface tension}},
  author={Chiocchetti, S. and Peshkov, I. and Gavrilyuk, S. and Dumbser, M.},
  journal={Journal of Computational Physics},
  volume={426},
  pages={109898},
  year={2021},
  publisher={Elsevier}
}

@article{ReALE2015,
  author = {W. Bo and M.J. Shashkov},
  title = {{Adaptive reconnection-based arbitrary Lagrangian Eulerian method}},
  journal = {Journal of Computational Physics},
  year = {2015},
  volume = {299},
  pages = {902--939}
}

@article{ReALE2010,
  author = {R. Loub{\`e}re and P. H. Maire and M.J. Shashkov and J. Breil and S. Galera},
  title = {{ReALE: A reconnection-based arbitrary-Lagrangian–Eulerian method}},
  journal = {Journal of Computational Physics},
  year = {2010},
  volume = {229},
  pages = {4724--4761}
}

@article{kemm2020simple,
  title={{A simple diffuse interface approach for compressible flows around moving solids of arbitrary shape based on a reduced Baer--Nunziato model}},
  author={Kemm, F. and Gaburro, E. and Thein, F. and Dumbser, M.},
  journal={Computers \& fluids},
  volume={204},
  pages={104536},
  year={2020},
  publisher={Elsevier}
}

@article{tavelli2020space,
  title={{Space-time adaptive ADER discontinuous Galerkin schemes for nonlinear hyperelasticity with material failure}},
  author={Tavelli, M. and Chiocchetti, S. and Romenski, E. and Gabriel, A.-A. and Dumbser, M.},
  journal={Journal of computational physics},
  volume={422},
  pages={109758},
  year={2020},
  publisher={Elsevier}
}

@article{Rusanov:1961a,
	author  = "Rusanov, V. V.",
	title   = "{Calculation of Interaction of Non--Steady
                    Shock Waves with Obstacles}",
	journal = "J. Comput. Math. Phys. USSR",
	year    = "1961",
	volume  = "1",
	pages   = "267--279",
}

@article{boscheri2013arbitrary,
  title={{Arbitrary-Lagrangian-Eulerian one-step WENO finite volume schemes on unstructured triangular meshes}},
  author={Boscheri, W. and Dumbser, M.},
  journal={Communications in Computational Physics},
  volume={14},
  number={5},
  pages={1174--1206},
  year={2013},
  publisher={Cambridge University Press}
}

@inproceedings{gaburro2021bookchapter,
  title={{High-order Arbitrary-Lagrangian-Eulerian schemes on crazy moving Voronoi meshes}},
  author={Gaburro, E. and Chiocchetti, S.},
  booktitle={Young Researchers Conference},
  pages={99--119},
  year={2021},
  organization={Springer}
}

@Article{Castro2006,
author ={M.J. Castro and J.M. Gallardo and C. Par\'es},
title = {High-order finite volume schemes based on reconstruction of states for solving hyperbolic systems with nonconservative products. {A}pplications
to shallow-water systems}, 
journal = {Mathematics of Computation},
volume = 75,
pages = {1103--1134},
year = 2006,
}

@Article{Pares2006,
author ={C. Par\'es},
title = {Numerical methods for nonconservative hyperbolic systems: a theoretical framework}, 
journal = {SIAM Journal on Numerical Analysis},
volume = 44,
pages = {300-321},
year = 2006,
}

@Article{Castro2008,
author ={M.J. Castro and J.M. Gallardo  and J.A. L\'opez and  C. Par\'es},
title = {Well-balanced high order extensions of Godunov's method for semilinear balance laws}, 
journal = {SIAM Journal of  Numerical Analysis},
volume ={46} ,
pages = {1012--1039},
year = 2008,
}

@article{busto2020high,
  title={{High order ADER schemes for continuum mechanics}},
  author={Busto, S. and Chiocchetti, S. and Dumbser, M. and Gaburro, E. and Peshkov, I.},
  journal={Frontiers in Physics},
  volume={8},
  pages={32},
  year={2020},
  publisher={Frontiers Media SA}
}

@article{hidalgo2011ader,
  title={{ADER schemes for nonlinear systems of stiff advection--diffusion--reaction equations}},
  author={Hidalgo, A. and Dumbser, M.},
  journal={Journal of Scientific Computing},
  volume={48},
  number={1-3},
  pages={173--189},
  year={2011},
  publisher={Springer}
}

@article{popov2023space,
  title={{Space-Time Adaptive ADER-DG Finite Element Method with LST-DG Predictor and a posteriori Sub-cell WENO Finite-Volume Limiting for Simulation of Non-stationary Compressible Multicomponent Reactive Flows}},
  author={Popov, I.S.},
  journal={Journal of Scientific Computing},
  volume={95},
  number={2},
  pages={44},
  year={2023},
  publisher={Springer}
}

@article{dumbser2018efficient,
  title={Efficient implementation of ADER discontinuous Galerkin schemes for a scalable hyperbolic PDE engine},
  author={Dumbser, Michael and Fambri, Francesco and Tavelli, Maurizio and Bader, Michael and Weinzierl, Tobias},
  journal={axioms},
  volume={7},
  number={3},
  pages={63},
  year={2018},
  publisher={MDPI}
}

@Article{ADERGRMHD,
author ={F. Fambri and M. Dumbser and S. K\"oppel and L. Rezzolla and O. Zanotti}, 
title = {{{ADER} discontinuous {G}alerkin schemes for general-relativistic ideal magnetohydrodynamics}}, 
journal = {Monthly Notices of the Royal Astronomical Society},
volume = {477},
pages = {4543--4564}, 
year = {2018},
}

@PhdThesis{mill, 
  AUTHOR =       {{R.C.} Millington and {E.F.} Toro and {L.A.M.} Nejad}, 
  TITLE =        {Arbitrary High Order Methods for Conservation Laws 
                  I: The One Dimensional Scalar Case}, 
  SCHOOL =  {Manchester Metropolitan University, Department of 
                  Computing and Mathematics}, 
  YEAR =         {1999}, 
  MONTH =        {June} 
}

@InCollection{toro1,
  author = 	 {{E.F.} Toro and {R.C.} Millington and {L.A.M} Nejad},
  title = 	 {Towards very high order {Godunov} schemes},
  booktitle = 	 {Godunov Methods. Theory and Applications},
  pages =	 {905--938},
  publisher =	 {Kluwer/Plenum Academic Publishers},
  year =	 2001,
  editor =	 {{E.F.} Toro}
}

@Article{toro3,
  author = 	 {{V.A.} Titarev and {E.F.} Toro},
  title = 	 {{ADER}: Arbitrary High Order {Godunov} Approach},
  journal = 	 {Journal of Scientific Computing},
  year = 	 2002,
  volume =	 17,
  number =	 {1-4},
  pages =	 {609--618},
  month =	 {December}
}

@Article{toro4,
  author = 	 {{E.F.} Toro and {V. A.} Titarev},
  title = 	 {Solution of the generalized {Riemann} problem for advection-reaction equations},
  journal = 	 {Proc. Roy. Soc. London},
  volume = {458}, 
  pages = {271-281}, 
  year = {2002}, 
  }

@article{Kaser05,
author = {K{\"a}ser, Martin and Iske, A.},
year = {2005},
month = {01},
pages = {489-508},
title = {Adaptive {ADER} schemes for the solution of scalar non-linear hyperbolic problems},
volume = {205},
journal = {J Comput Phys}
}

@phdthesis{Kaser2003,
  title={Adaptive methods for the numerical simulation of transport processes},
  author={K{\"a}ser, Martin Andreas},
  year={2003},
  school={Technische Universit{\"a}t M{\"u}nchen}
}

@Article{DumbserKaeser07,
author  ={M. Dumbser and M. K\"aser and V.A Titarev and E.F. Toro},
title   = {Quadrature-Free Non-Oscillatory Finite Volume Schemes on Unstructured Meshes for Nonlinear Hyperbolic Systems},
journal = {Journal of Computational Physics},
year = 2007,
volume = 226,
pages = {204--243}, 
}

@Article{CastroToro,
author ={C. C. Castro and E. F. Toro},
title = {Solvers for the High-Order {R}iemann Problem for Hyperbolic Balance Laws},
journal = {Journal of Computational Physics},
year = 2008,
volume = 227,
pages = {2481--2513},
}

@Article{QiuDumbserShu,
  author = 	 {J. Qiu and M. Dumbser and {C.W.} Shu},
  title = 	 {The Discontinuous {Galerkin}  Method with {Lax}-{Wendroff} Type Time Discretizations},
  journal =  {Computer Methods in Applied Mechanics and Engineering},
  year = 	 2005,
  volume =	 194,
  pages =	 {4528--4543}
}

@Article{dumbser_jsc,
  author = 	 {M. Dumbser and {C.D.} Munz},
  title = 	 {Building Blocks for Arbitrary High Order Discontinuous {Galerkin} Schemes},
  journal =      {Journal of Scientific Computing},
  year = 	 2006,
  volume =       27,
  pages =        {215--230}
}

@article{Gassner2011a,
author = {G. Gassner and M. Dumbser and F.  Hindenlang and C.D. Munz},
title = {{Explicit one--step time discretizations for discontinuous {G}alerkin and finite volume schemes based on local predictors}},
journal = {Journal of Computational Physics},
number = {11},
pages = {4232--4247},
volume = {230},
year = {2011}
}

@article{busto2021high,
  title={On high order {ADER} discontinuous {Galerkin} schemes for first order hyperbolic reformulations of nonlinear dispersive systems},
  author={Busto, S. and Dumbser, M. and Escalante, C. and Favrie, N. and Gavrilyuk, S.},
  journal={Journal of Scientific Computing},
  volume={87},
  number={2},
  pages={48},
  year={2021},
  publisher={Springer}
}

@article{gaburro2025high,
  title={{High order Well-Balanced Arbitrary-Lagrangian-Eulerian ADER discontinuous Galerkin schemes on general polygonal moving meshes}},
  author={Gaburro, Elena},
  journal={Computers \& Fluids},
  pages={106764},
  year={2025},
  publisher={Elsevier}
}

@article{han2021dec,
  title={{DeC and ADER}: similarities, differences and a unified framework},
  author={Han Veiga, Maria and {\"O}ffner, Philipp and Torlo, Davide},
  journal={Journal of Scientific Computing},
  volume={87},
  number={1},
  pages={2},
  year={2021},
  publisher={Springer}
}

@article{popov2025high,
  title={High order {ADER-DG method with local DG} predictor for solutions of differential-algebraic systems of equations},
  author={Popov, Ivan S.},
  journal={Journal of Scientific Computing},
  volume={102},
  number={2},
  pages={48},
  year={2025},
  publisher={Springer}
}

@article{lakiss2024ader,
  title={{ADER discontinuous Galerkin material point method}},
  author={Lakiss, Alaa and Heuz{\'e}, Thomas and Tannous, Mikhael and Stainier, Laurent},
  journal={International Journal for Numerical Methods in Engineering},
  volume={125},
  number={1},
  pages={e7365},
  year={2024},
  publisher={Wiley Online Library}
}

@article{dumbser2024well,
  title={{A well-balanced discontinuous Galerkin method for the first--order Z4 formulation of the Einstein--Euler system}},
  author={Dumbser, Michael and Zanotti, Olindo and Gaburro, Elena and Peshkov, Ilya},
  journal={Journal of Computational Physics},
  volume={504},
  pages={112875},
  year={2024},
  publisher={Elsevier}
}

@inproceedings{zanotti2025new,
  title={A new first-order formulation of the {Einstein} equations: comparison among different high order numerical schemes},
  author={Zanotti, Olindo and Dumbser, Michael and Balsara, Dinshaw and Bhoriya, Deepak},
  booktitle={Journal of Physics: Conference Series},
  volume={2997},
  number={1},
  pages={012015},
  year={2025},
  organization={IOP Publishing}
}

@article{rio2024high,
  title={{High-order ADER Discontinuous Galerkin schemes for a symmetric hyperbolic model of compressible barotropic two-fluid flows}},
  author={R{\'\i}o-Mart{\'\i}n, Laura and Dumbser, Michael},
  journal={Communications on Applied Mathematics and Computation},
  volume={6},
  number={4},
  pages={2119--2154},
  year={2024},
  publisher={Springer}
}

@article{ciallella2024very,
  title={{Very high order treatment of embedded curved boundaries in compressible flows: ADER discontinuous Galerkin with a space-time Reconstruction for Off-site data}},
  author={Ciallella, Mirco and Clain, Stephane and Gaburro, Elena and Ricchiuto, Mario},
  journal={Computers \& Mathematics with Applications},
  volume={175},
  pages={1--18},
  year={2024},
  publisher={Elsevier}
}

@article{marot2025mixed,
  title={{Mixed-Precision in High-Order Methods: the Impact of Floating-Point Precision on the ADER-DG Algorithm}},
  author={Marot-Lassauzaie, Marc and Bader, Michael},
  journal={arXiv preprint arXiv:2504.06889},
  year={2025}
}

@article{busto2022new,
  title={A new family of thermodynamically compatible discontinuous {Galerkin} methods for continuum mechanics and turbulent shallow water flows},
  author={Busto, Saray and Dumbser, Michael},
  journal={Journal of Scientific Computing},
  volume={93},
  number={2},
  pages={56},
  year={2022},
  publisher={Springer}
}

@inproceedings{wolf2020optimization,
  title={Optimization and local time stepping of an ader-dg scheme for fully anisotropic wave propagation in complex geometries},
  author={Wolf, Sebastian and Gabriel, Alice-Agnes and Bader, Michael},
  booktitle={International Conference on Computational Science},
  pages={32--45},
  year={2020},
  organization={Springer}
}

@article{dumbser2016high,
  title={High order {ADER} schemes for a unified first order hyperbolic formulation of continuum mechanics: viscous heat-conducting fluids and elastic solids},
  author={Dumbser, Michael and Peshkov, Ilya and Romenski, Evgeniy and Zanotti, Olindo},
  journal={Journal of Computational Physics},
  volume={314},
  pages={824--862},
  year={2016},
  publisher={Elsevier}
}

@article{dumbser2017high,
  title={High order {ADER} schemes for a unified first order hyperbolic formulation of {Newtonian }continuum mechanics coupled with electro-dynamics},
  author={Dumbser, Michael and Peshkov, Ilya and Romenski, Evgeniy and Zanotti, Olindo},
  journal={Journal of Computational Physics},
  volume={348},
  pages={298--342},
  year={2017},
  publisher={Elsevier}
}

@inproceedings{yuan2022hybrid,
  title={Hybrid high-order finite volume discontinuous {Galerkin} methods for turbulent flows},
  author={Yuan, Dean and Tsoutsanis, Panagiotis and Jenkins, K},
  booktitle={World Congress in Computational Mechanics and ECCOMAS Congress},
  year={2022}
}

@article{lorcher2007discontinuous,
  title={{A discontinuous Galerkin scheme based on a space--time expansion. I. Inviscid compressible flow in one space dimension}},
  author={L{\"o}rcher, Frieder and Gassner, Gregor and Munz, C-D},
  journal={Journal of Scientific Computing},
  volume={32},
  number={2},
  pages={175--199},
  year={2007},
  publisher={Springer}
}

@article{popov2025theoryinternalstructureaderdg,
      title={Theory and internal structure of {ADER-DG} method for ordinary differential equations}, 
      author={Popov, Ivan S.},
      year={2025},
      journal={arXiv preprint arXiv:2508.13824}
}

@article{balsara2012self,
  title={Self-adjusting, positivity preserving high order schemes for hydrodynamics and magnetohydrodynamics},
  author={Balsara, Dinshaw S},
  journal={Journal of Computational Physics},
  volume={231},
  number={22},
  pages={7504--7517},
  year={2012},
  publisher={Elsevier}
}

@article{fernandez2022arbitrary,
  title={An arbitrary high order well-balanced {ADER-DG} numerical scheme for the multilayer shallow-water model with variable density},
  author={Fern{\'a}ndez, E Guerrero and D{\'\i}az, MJ Castro and Dumbser, Michael and De Luna, T Morales},
  journal={Journal of Scientific Computing},
  volume={90},
  number={1},
  pages={52},
  year={2022},
  publisher={Springer}
}

@article{rannabauer2018ader,
  title={{ADER-DG} with a-posteriori finite-volume limiting to simulate tsunamis in a parallel adaptive mesh refinement framework},
  author={Rannabauer, Leonhard and Dumbser, Michael and Bader, Michael},
  journal={Computers \& Fluids},
  volume={173},
  pages={299--306},
  year={2018},
  publisher={Elsevier}
}

@article{popov2024effective,
  title={The effective use of {BLAS} interface for implementation of finite-element {ADER-DG} and finite-volume {ADER-WENO} methods},
  author={Popov, Ivan S.},
  number={5},
  journal={Communications in Computational Physics},
  volume={38},
  year={2025},
  pages={1237–1330},
  DOI={10.4208/cicp.OA-2024-0202}
}

@article{veiga2024improving,
  title={On improving the efficiency of {ADER} methods},
  author={Han Veiga, Maria and Micalizzi, Lorenzo and Torlo, Davide},
  journal={Applied Mathematics and Computation},
  volume={466},
  pages={128426},
  year={2024},
  publisher={Elsevier}
}

@article{micalizzi2025efficient,
  title={Efficient iterative arbitrary high-order methods: an adaptive bridge between low and high order},
  author={Micalizzi, Lorenzo and Torlo, Davide and Boscheri, Walter},
  journal={Communications on Applied Mathematics and Computation},
  volume={7},
  number={1},
  pages={40--77},
  year={2025},
  publisher={Springer}
}

@article{offner2025analysis,
	title={Analysis for implicit and implicit-explicit {ADER} and {DeC }methods for ordinary differential equations, advection-diffusion and advection-dispersion equations},
	author={{\"O}ffner, Philipp and Petri, Louis and Torlo, Davide},
	journal={Applied Numerical Mathematics},
	volume={212},
	pages={110--134},
	year={2025},
	publisher={Elsevier}
}

@article{boscheri2022construction,
	title={On the construction of conservative semi-Lagrangian {IMEX} advection schemes for multiscale time dependent {PDEs}},
	author={Boscheri, Walter and Tavelli, Maurizio and Pareschi, Lorenzo},
	journal={Journal of Scientific Computing},
	volume={90},
	number={3},
	pages={97},
	year={2022},
	publisher={Springer}
}

@article{michel2021spectral,
	title={{Spectral analysis of continuous FEM for hyperbolic PDEs: influence of approximation, stabilization, and time-stepping}},
	author={Michel, Sixtine and Torlo, Davide and Ricchiuto, Mario and Abgrall, R{\'e}mi},
	journal={Journal of Scientific Computing},
	volume={89},
	number={2},
	pages={31},
	year={2021},
	publisher={Springer}
}

@article{michel2023spectral,
	title={{Spectral analysis of high order continuous FEM for hyperbolic PDEs on triangular meshes: influence of approximation, stabilization, and time-stepping}},
	author={Michel, Sixtine and Torlo, Davide and Ricchiuto, Mario and Abgrall, R{\'e}mi},
	journal={Journal of Scientific Computing},
	volume={94},
	number={3},
	pages={49},
	year={2023},
	publisher={Springer}
}

@article{friedrich2019entropy_sbp,
	title={Entropy stable space--time discontinuous {Galerkin} schemes with summation-by-parts property for hyperbolic conservation laws},
	author={Friedrich, Lucas and Schn{\"u}cke, Gero and Winters, Andrew R and Fern{\'a}ndez, David C Del Rey and Gassner, Gregor J and Carpenter, Mark H},
	journal={Journal of Scientific Computing},
	volume={80},
	number={1},
	pages={175--222},
	year={2019},
	publisher={Springer}
}

@article{offner2018stability,
	title={Stability of correction procedure via reconstruction with summation-by-parts operators for {Burgers'} equation using a polynomial chaos approach},
	author={{\"O}ffner, Philipp and Glaubitz, Jan and Ranocha, Hendrik},
	journal={ESAIM: Mathematical Modelling and Numerical Analysis},
	volume={52},
	number={6},
	pages={2215--2245},
	year={2018},
	publisher={EDP Sciences}
}

@article{gaburro2023high,
	title={High order entropy preserving {ADER-DG }schemes},
	author={Gaburro, Elena and {\"O}ffner, Philipp and Ricchiuto, Mario and Torlo, Davide},
	journal={Applied Mathematics and Computation},
	volume={440},
	pages={127644},
	year={2023},
	publisher={Elsevier}
}

@article{ranocha2020relaxation,
	title={{Relaxation Runge--Kutta methods: Fully discrete explicit entropy-stable schemes for the compressible Euler and Navier--Stokes equations}},
	author={Ranocha, Hendrik and Sayyari, Mohammed and Dalcin, Lisandro and Parsani, Matteo and Ketcheson, David I},
	journal={SIAM Journal on Scientific Computing},
	volume={42},
	number={2},
	pages={A612--A638},
	year={2020},
	publisher={SIAM}
}

@article{abgrall2022relaxation,
	title={Relaxation deferred correction methods and their applications to residual distribution schemes},
	author={Abgrall, R{\'e}mi and Le M{\'e}l{\'e}do, {\'E}lise and {\"O}ffner, Philipp and Torlo, Davide},
	journal={The SMAI Journal of computational mathematics},
	volume={8},
	pages={125--160},
	year={2022}
}

@article{muzzolon2025high,
  title={High order numerical discretizations of the {Einstein-Euler} equations in the Generalized Harmonic formulation},
  author={Muzzolon, S. and Dumbser, M. and Zanotti, O. and Gaburro, E.},
  journal={arXiv preprint arXiv:2512.24121},
  year={2025}
}

@article{ricardo2025scalable,
  title={Scalable {ADER-DG} Transport Method with Polynomial Order Independent {CFL} Limit},
  author={Ricardo, Kieran and Duru, Kenneth},
  journal={arXiv preprint arXiv:2507.07304},
  year={2025}
}

@Article{Gaburro2021,
  author    = {Gaburro, Elena and Dumbser, Michael},
  title     = {A posteriori subcell finite volume limiter for general {PNPM} schemes: applications from gasdynamics to relativistic magnetohydrodynamics},
  journal   = {Journal of Scientific Computing},
  year      = {2021},
  volume    = {86},
  number    = {3},
  pages     = {37},
  publisher = {Springer},
}

@article{dumbser2016space,
  title={A space-time discontinuous {Galerkin} method for {Boussinesq}-type equations},
  author={Dumbser, Michael and Facchini, Matteo},
  journal={Applied Mathematics and Computation},
  volume={272},
  pages={336--346},
  year={2016},
  publisher={Elsevier}
}

@article{boscheri2017arbitrary,
  title={{Arbitrary-Lagrangian--Eulerian discontinuous Galerkin} schemes with a posteriori subcell finite volume limiting on moving unstructured meshes},
  author={Boscheri, Walter and Dumbser, Michael},
  journal={Journal of Computational Physics},
  volume={346},
  pages={449--479},
  year={2017},
  publisher={Elsevier}
}

@article{jackson2017eigenvalues,
  author={H. Jackson}, 
  title={On the eigenvalues of the {ADER-WENO} {G}alerkin predictor}, 
  journal={Journal of Computational Physics},
  volume={333},
  pages={409--413},
  year={2017},
}

@article{cockburn1998local,
	title={The local discontinuous {Galerkin} method for time-dependent convection-diffusion systems},
	author={Cockburn, Bernardo and Shu, Chi-Wang},
	journal={SIAM journal on numerical analysis},
	volume={35},
	number={6},
	pages={2440--2463},
	year={1998},
	publisher={SIAM}
}

@article{liu20082,
	title={L2 stability analysis of the central discontinuous {Galerkin} method and a comparison between the central and regular discontinuous {Galerkin} methods},
	author={Liu, Yingjie and Shu, Chi-Wang and Tadmor, Eitan and Zhang, Mengping},
	journal={ESAIM: Mathematical Modelling and Numerical Analysis},
	volume={42},
	number={4},
	pages={593--607},
	year={2008},
	publisher={EDP Sciences}
}

@incollection{sherwin2000dispersion,
	title={Dispersion analysis of the continuous and discontinuous {Galerkin} formulations},
	author={Sherwin, Spencer},
	booktitle={Discontinuous Galerkin Methods: Theory, Computation and Applications},
	pages={425--431},
	year={2000},
	publisher={Springer}
}

@article{may2017explicit,
  title={An explicit implicit scheme for cut cells in embedded boundary meshes},
  author={May, Sandra and Berger, Marsha},
  journal={Journal of Scientific Computing},
  volume={71},
  number={3},
  pages={919--943},
  year={2017},
  publisher={Springer}
}

@article{may2024accuracy,
  title={Accuracy analysis for explicit-implicit finite volume schemes on cut cell meshes},
  author={May, Sandra and Laakmann, Fabian},
  journal={Communications on Applied Mathematics and Computation},
  volume={6},
  number={4},
  pages={2239--2264},
  year={2024},
  publisher={Springer}
}

@article{yan2002local1,
  title={A local discontinuous Galerkin method for KdV type equations},
  author={Yan, Jue and Shu, Chi-Wang},
  journal={SIAM Journal on Numerical Analysis},
  volume={40},
  number={2},
  pages={769--791},
  year={2002},
  publisher={SIAM}
}

@article{yan2002local2,
  title={Local discontinuous Galerkin methods for partial differential equations with higher order derivatives},
  author={Yan, Jue and Shu, Chi-Wang},
  journal={Journal of Scientific Computing},
  volume={17},
  number={1},
  pages={27--47},
  year={2002},
  publisher={Springer}
}

\end{document}